\documentclass{article}%
\usepackage{graphicx}
\usepackage{amsmath,amssymb,amsfonts}%
\usepackage{theorem}%
\usepackage{color}%
\usepackage{caption}
\usepackage{subcaption}
\usepackage{hyperref}
\usepackage[font={small, it}]{caption}
\usepackage{chngcntr}
\usepackage{overpic}

\theoremstyle{change}%
\newtheorem{definition}{Definition:}[section]%
\newtheorem{proposition}[definition]{Proposition:}%
\newtheorem{theorem}[definition]{Theorem:}%
\newtheorem{lemma}[definition]{Lemma:}%
\newtheorem{corollary}[definition]{Corollary:}%
{\theorembodyfont{\rmfamily}\newtheorem{remark}[definition]{Remark:}}%
{\theorembodyfont{\rmfamily}}%

\newenvironment{proof}{{\bf Proof:}}
{\qquad \hspace*{\fill} $\Box$}%

\newcommand{\fg}{\mathfrak{g}}%
\newcommand{\fn}{\mathfrak{n}}%
\newcommand{\tr}{\operatorname{tr}}%

\newcommand{\rme}{\mathrm{e}}%
\newcommand{\N}{\mathbb{N}}%
\newcommand{\R}{\mathbb{R}}%
\newcommand{\Z}{\mathbb{Z}}%

\counterwithin{equation}{section}

\begin{document}

\title{Maxwell Strata in the sub-Riemannian problem on solvable, nonnilpotent regular three-dimensional Lie groups}
 \author{Adriano Da Silva\thanks{A. Da Silva was supported by S\~ao Paulo Research Foundation FAPESP grant 2024/13683-3} \; and Margarita Quispe Tusco\thanks{M.Tusco was partially support by CAPES (Brazil) grant ``Move las Americas'' and  Programa de Doctorado en Ciencias con Menci\'on en Matem\'atica,
Departamento de Matem\'atica, Universidad de
Tarapac\'a, Arica, Chile,} \\
		Departamento de Matem\'atica,\\Universidad de Tarapac\'a - Chile.
 		\and
 		Lino Grama\thanks{L. Grama research is partially supported by S\~ao Paulo Research Foundation FAPESP grants 2018/13481-0, 2023/13131-8, and CNPq grant no.306021/2024-2.} 
        \; and Douglas Duarte Novaes\thanks{D.D. Novaes was partially supported by the São Paulo Research Foundation (FAPESP), Grant No. 2024/15612-6; by the Conselho Nacional de Desenvolvimento Científico e Tecnológico (CNPq), Grant No. 301878/2025-0; and by the Coordenação de Aperfeiçoamento de Pessoal de Nível Superior - Brasil (CAPES), through the MATH-AmSud program, Grant No. 88881.179491/2025-01.} \\
 		Universidade Estadual de Campinas, Brazil\\
      Instituto de Matemática, Estatística e Computação Científica
 }
 \date{\today }

 \maketitle

 \begin{abstract}
In this paper, we study the sub-Riemannian problem associated with contact structures on connected, simply connected, solvable, non-nilpotent, regular three-dimensional Lie groups. For these groups, the vertical component of the Hamiltonian system takes the form of a perturbed pendulum. A qualitative phase-space analysis allows us to prove that this vertical component exhibits nontrivial symmetries. In particular, we are able to fully characterize the Maxwell set corresponding to these symmetries, and show that its first Maxwell time coincides with the period of the pendulum for almost all geodesics. This result yields an explicit upper bound for the cut time in terms of the period of the pendulum.
 \end{abstract}

 {\small {\bf Keywords:} Sub-Riemannian geometry,  three-dimensional solvable Lie groups, Maxwell points, cut time} 
	
 {\small {\bf Mathematics Subject Classification (2020): 53C17, 93B27, 93C10, 22E30, 22E25.}}%

\section{Introduction}
In the last decades, the study of sub-Riemannian geometry from a control-theoretic perspective has attracted growing interest, leading to a number of significant contributions in the field (see for instance the \cite{AADBUB, AAYS, RM} and references therein). Among the topics of particular relevance is the full description of the cut and conjugate loci, which has been extensively investigated in the setting of contact structures on three-dimensional manifolds, specially on three-dimensional Lie groups. These cases have received special attention due to both the richness of their underlying structure and the possibility of obtaining explicit expressions for normal extremal trajectories.

Indeed, complete optimal syntheses have been derived on the Heisenberg group \cite{RB, VAGV0}, on the semisimple groups $\mathrm{SO}(3), \mathrm{SU}(2)$ and $\mathrm{SL}(2)$ \cite{IYBYS, UBFR}, on the group of motions of the plane $\mathrm{SE}(2)$ \cite{Sach1, Sach2, Sach5}, on the group of motions on the pseudo-Euclidean plane \cite{ButtSach0, ButtSach1, ButtSach2}, as well as in other related settings \cite{AADB, AAAYS1, AAAYS2, Sach3, Sach4}. These works provide deep and elegant results concerning the structure of cut and conjugate loci. 

Due to the high degree of symmetry in these groups, one of the main tools employed to study the loss of optimality of geodesics is the analysis of Maxwell points associated with discrete symmetries of the system. Such points play a crucial role, since extremal trajectories lose optimality after the first Maxwell point. Consequently, the corresponding Maxwell time naturally provides an upper bound for the cut time.

The goal of the present work is to contribute to this line of research by analyzing solvable, non-nilpotent, regular three-dimensional Lie groups, that is, groups given by semi-direct product of $\R\times_{\rho}\R^2$, where $\rho_t:=\rme^{t\theta}$ and the $2\times 2$ matrix $\theta$ satisfying $\det\theta\cdot\tr\theta\neq 0$. For this class of groups, the vertical part of the Hamiltonian system associated with the optimization problem reduces to a nontrivial perturbed pendulum. Instead of seeking closed-form expressions for the normal extremal trajectories, as in \cite{ButtSach0, ButtSach1, ButtSach2, Sach1, Sach2}, we carry out a qualitative study of the phase space. In particular, we show that the system admits nontrivial symmetries and that the phase space of the pendulum-type solutions can be decomposed into regions by heteroclinic orbits, in analogy with the classical pendulum. Furthermore, we study several properties of the period of the solutions and show that, in the finite case, the period can be characterized by the times at which the solutions intersect specific fibers of the cylinder. Building on these results, we carry out an analysis of the Maxwell sets associated with these symmetries. In particular, we are able to fully characterize the Maxwell set and determine the first Maxwell time in terms of the pendulum period for almost all trajectories. Finally, we establish that this same period provides an upper bound for the cut time on the whole cylinder.

The paper is organized as follows. Section 2 introduces the necessary preliminaries. We present the framework for the three-dimensional Lie groups under consideration, define the sub-Riemannian structure, and formulate the optimization problem of interest. Several auxiliary lemmas and propositions are also proved in this section.

In Section 3, we analyze the vertical component of the Hamiltonian system associated with the optimization problem. We carry out a qualitative analysis of the equation, first establishing the existence of homoclinic orbits connecting the saddle points. This result allows us to decompose the phase space of the system, drawing an analogy with classical results for the simple pendulum. We then study the main properties of the period of the pendulum solutions, including its continuous extension to the entire cylinder. For finite values, we show that the period can be characterized geometrically as the minimum time a solution requires to cross two prescribed values.

Subsequently, we investigate the symmetries of the pendulum solutions. We prove that the perturbed pendulum still admits nontrivial symmetries and, although an explicit formula for the normal extremal trajectories is not available, their integral representations allow us to demonstrate that the symmetries of the pendulum induce corresponding symmetries in the trajectories. This analysis sets the stage for Section 4, where we study the Maxwell set associated with these symmetries. In particular, we show that Maxwell points correspond to the zeros of the horizontal component. As a result, we prove that the $\R^2$-component of a normal extremal trajectory has no zeros, while the zeros of its $\R$-component can be fully characterized.

These characterizations lead to our main results: we obtain explicit expressions for the Maxwell set and for the first Maxwell time in terms of the period of the pendulum trajectories. As a consequence, we establish that the first Maxwell time is invariant under both the pendulum dynamics and the symmetry actions. Finally, we conclude the paper by proving that the pendulum period provides an upper bound for the cut time of normal extremal trajectories.

\section{Preliminaries}

In this section we introduce the main results and notations needed for the rest of the paper.

\subsection{3D Solvable, Nonnilpotent Lie groups}

In what follows, we introduce a representation for connected, simply connected, solvable, non-nilpotent three-dimensional Lie groups. This framework allows us to study several of their properties through the dynamics of $2 \times 2$ matrices.  

According to \cite[Theorem 1.4, Chapter 7]{onis}, every real three-dimensional Lie algebra that is solvable and non-nilpotent can be written as a \textit{semidirect product} of $\mathbb{R}$ with $\mathbb{R}^2$, denoted by
\[
\mathfrak{g}(\theta) := \mathbb{R} \times_{\theta} \mathbb{R}^2,
\]
where $\theta(t) := t\theta$ for a fixed $2 \times 2$ real matrix $\theta$, which, up to isomorphism, can be chosen in one of the following canonical forms:
\[
\theta \in \left\{ 
\begin{pmatrix} 1 & 1 \\ 0 & 1 \end{pmatrix}, \hspace{.5cm} 
\begin{pmatrix} 1 & 0 \\ 0 & \gamma \end{pmatrix} \; \Big|\; |\gamma| \leq 1, \hspace{.5cm} 
\begin{pmatrix} \gamma & -1 \\ 1 & \gamma \end{pmatrix} \; \Big|\; \gamma \in \mathbb{R} 
\right\}.
\]

The Lie bracket in $\mathfrak{g}(\theta)$ is completely determined by linearity and the rule
\[
[(1, 0), (0, \eta)] = (0, \theta \eta), \quad \text{for all } \eta \in \mathbb{R}^2.
\]

The corresponding connected, simply connected Lie group $G(\theta)$ associated with $\mathfrak{g}(\theta)$ is, up to isomorphism, given by the semidirect product
\[
G(\theta) := \mathbb{R} \times_{\rho} \mathbb{R}^2,
\]
where the representation $\rho: \mathbb{R} \to \mathrm{GL}(2, \mathbb{R})$ is defined by
\[
\rho_t := \mathrm{e}^{t\theta}.
\]
The group operation is then given by
\[
(z_1, w_1)(z_2, w_2) = \big(z_1 + z_2,\, w_1 + \rho_{z_1} w_2\big),
\]
for all $(z_1, w_1), (z_2, w_2) \in \mathbb{R} \times_{\rho} \mathbb{R}^2$. In this setting, a left-invariant vector field has the form
\[
X(z, w) = (\sigma, \rho_z \eta), \qquad \text{with} \qquad X(0, 0) = (\sigma, \eta) \in \mathfrak{g}(\theta).
\]

Moreover, the automorphism groups of $\mathfrak{g}(\theta)$ and $G(\theta)$ are given, respectively, by (see \cite[Propositions 2.1 and 2.2]{VAASDH})
\[
\mathrm{Aut}(\mathfrak{g}(\theta)) = 
\left\{ 
\begin{pmatrix}
\epsilon & 0 \\
\eta & P
\end{pmatrix} 
: \eta \in \mathbb{R}^2, \, P \in \mathrm{GL}(\mathbb{R}^2), \, P\theta = \epsilon \theta P
\right\},
\]
and
\[
\mathrm{Aut}(G(\theta)) = \Big\{ \phi(z, w) := \big(\epsilon z,\, Pw + \Lambda_{\epsilon z}^{\theta}\eta\big) : \eta \in \mathbb{R}^2, \, P \in \mathrm{GL}(\mathbb{R}^2), \, P\theta = \epsilon \theta P \Big\},
\]
where $\epsilon = 1$ if $\mathrm{tr}\theta \neq 0$, and $\epsilon \in \{-1, 1\}$ if $\mathrm{tr}\theta= 0$, and the operator $\Lambda:\R\times \R^2\rightarrow \R^2$ is defined by
    $$\Lambda_z^{\theta}\eta := \int_0^z \rho_t \eta \,\mathrm{d}t = \int_0^z \mathrm{e}^{t\theta} \eta \,\mathrm{d}t.$$

\begin{definition}
\label{definition}
The group $G(\theta)$ and/or its algebra $\fg(\theta)$ are said to be \textbf{regular} if $\det\theta \cdot \mathrm{tr}\theta \neq 0$.
\end{definition}


\subsection{Sub-Riemannian structures on $G(\theta)$}
A $2$-dimensional left-invariant distribution on the group $G(\theta)$ is a map 
\[
\Delta^L: G(\theta) \longrightarrow TG(\theta), \qquad  
\Delta^L(z, w) = (dL_{(z, w)})_{(0,0)}\Delta,
\]
where $\Delta \subset \mathfrak{g}(\theta)$ is a 2-dimensional vector subspace.  
If $\langle \cdot, \cdot \rangle$ is an inner product on $\Delta$, we equip $\Delta^L$ with a left-invariant Euclidean metric by setting
\[
\langle X, Y\rangle_{(z, w)} := 
\langle (dL_{(z, w)^{-1}})_{(z, w)}X,\; (dL_{(z, w)^{-1}})_{(z, w)}Y \rangle,
\qquad X, Y \in T_{(z, w)}G(\theta), \ (z, w)\in G(\theta).
\]

\begin{definition}
A \textbf{sub-Riemannian} (or \textbf{contact}) structure on $G(\theta)$ is a bracket-generating, $2$-dimensional left-invariant distribution endowed with an inner product.
\end{definition}

Let $\Delta$ be a $2$-dimensional subspace of $\mathfrak{g}(\theta)$, and let $\mathfrak{n}$ denote the nilradical of $\mathfrak{g}(\theta)$, that is, $\mathfrak{n} = \{0\}\times\mathbb{R}^2$.  
Since $\mathfrak{g}(\theta)$ is three-dimensional, we have
\[
\Delta \not\subset \mathfrak{n}
\quad \iff \quad \dim(\Delta \cap \mathfrak{n}) = 1.
\] 

If $\eta \in \mathbb{R}^2$ is a nonzero vector with $(0, \eta)\in \Delta \cap \mathfrak{n}$, then by \cite[Proposition 3.5]{VAASDH},
\[
\Delta \ \text{is a subalgebra} 
\quad \iff \quad \omega(\theta \eta, \eta) = 0,
\]
and consequently,
\[
\Delta \ \text{is bracket-generating} 
\quad \iff \quad \omega(\theta \eta, \eta) \neq 0,
\]
where $\omega(\eta, \xi) := \det(\eta \mid \xi)$ is the determinant of the matrix with columns $\eta$ and $\xi$, i.e. the unique (up to a nonzero scalar) nondegenerate skew-symmetric bilinear form on $\mathbb{R}^2$.

For any $\eta \in \mathbb{R}^2$ with $\omega(\theta \eta, \eta) \neq 0$, we denote by $\Delta_{\eta}$ the $2$-dimensional sub-Riemannian structure for which $\{(1,0), (0,\eta)\}$ is an orthonormal basis.  
The following result shows that, up to isometries and rescaling, $\Delta_{\eta}$ is in fact the unique $2$-dimensional sub-Riemannian structure on $\mathfrak{g}(\theta)$.

\begin{lemma}
\label{unique}
Let $\eta\in\R^2$ be a nonzero vector satisfying $\omega(\eta, \theta\eta)\neq 0$. Then, up to isometries and rescaling, $\Delta_{\eta}$ is the only $2$-dimensional sub-Riemannian structure on $G(\theta)$.
\end{lemma}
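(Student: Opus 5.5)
Given an arbitrary sub-Riemannian structure $(\Delta,\langle\cdot,\cdot\rangle)$ on $G(\theta)$, I will construct an automorphism of $\fg(\theta)$ mapping $\Delta$ onto some $\Delta_{\eta'}$ and, up to a constant factor, the metric of $\Delta$ onto the metric of $\Delta_{\eta'}$. Its integration (a group automorphism, by simple connectedness and the description of $\mathrm{Aut}(G(\theta))$) is then an isometry after rescaling. Finally I will show that all the structures $\Delta_{\eta}$ are equivalent to each other in the same sense.

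\textbf{Step 1: normalizing the distribution.} Since $\Delta$ is bracket generating, it is not a subalgebra, so $\Delta\not\subset\fn$ (because $\fn$ is an abelian subalgebra). Hence $\Delta\cap\fn=\R(0,\eta_0)$ with $\omega(\theta\eta_0,\eta_0)\neq0$ by the criterion quoted above, and $\Delta=\mathrm{span}\{(1,\xi),(0,\eta_0)\}$ for some $\xi\in\R^2$.

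\textbf{Step 2: normalizing the metric.} Choose an orthonormal basis of $\Delta$ of the form $\{(a,\zeta),(0,\eta_1)\}$ with $\eta_1=c\eta_0$. This is possible by taking the unit vector $(0,\eta_1)$ along the line $\Delta\cap\fn$ and completing it orthonormally; the first coordinate satisfies $a\neq0$. I then want an automorphism $\psi=\begin{pmatrix}\epsilon&0\\ \mu&P\end{pmatrix}$ with $\psi(1,0)=\lambda(a,\zeta)$ and $\psi(0,\eta)=\lambda(0,\eta_1)$ for a suitable target $\eta$ and a scalar $\lambda$. The first condition forces $\epsilon=\lambda a$. Rescaling the metric by $\lambda$ allows $\lambda=\epsilon/a$, and then $\mu=\lambda\zeta$ is free. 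For the second condition take $P=\lambda\,\mathrm{Id}$ when $\epsilon=1$ (this commutes with $\theta$) and $\eta=\eta_1$. The constraint $P\theta=\epsilon\theta P$ holds, so we land on $\Delta_{\eta_1}$ with its standard metric, up to the rescaling factor $\lambda$. When $a<0$ and $\tr\theta\neq0$ we must have $\epsilon=1$, so $\lambda<0$; this is harmless, since replacing the basis vector $(a,\zeta)$ by $(-a,-\zeta)$ keeps the basis orthonormal and makes $a>0$.

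\textbf{Step 3: different $\eta$'s.} For two admissible $\eta,\eta'$ I look for $P$ commuting with $\theta$ and $\lambda>0$ with $P\eta=\lambda\eta'$; then $\psi=\mathrm{diag}(1,P)$ sends $\Delta_\eta$ to $\Delta_{\eta'}$. Here $\lambda$ can be absorbed by taking $\epsilon=1$ and rescaling, since $(1,0)\mapsto(1,0)$ while $(0,\eta)\mapsto(0,P\eta)$, and this mismatch needs care. The centralizer of $\theta$ contains $\mathrm{span}\{I,\theta\}$, and the condition $\omega(\theta\eta,\eta)\neq0$ says exactly that $\eta$ is a cyclic vector. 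Hence $\{p(\theta)\eta\}$ is all of $\R^2$, and $P=p(\theta)$ can be chosen with $P\eta=\eta'$ exactly, invertible because $\eta'$ is also cyclic. So no scaling of $\eta$ is needed and the vector $(1,0)$ stays fixed, which makes $\psi$ an isometry.

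The main obstacle is making these sign and scaling bookkeeping points consistent with the constraint $P\theta=\epsilon\theta P$ in the regular case, where $\epsilon=1$ is forced. The cyclic-vector argument resolves it, because it lets us use $\epsilon=1$, $P\in\R[\theta]$, and no rescaling at all in Step 3.
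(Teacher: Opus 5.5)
Your proof is correct and is essentially the paper's argument. It uses an automorphism with $\epsilon=1$ that combines a shear killing the $\fn$-component of the vector orthogonal to $\Delta\cap\fn$ with a block $P$ commuting with $\theta$ and sending one cyclic vector to the other; the paper defines $P$ on the basis $\{\xi,\theta\xi\}$ and checks $P\theta=\theta P$ via Cayley--Hamilton, whereas you take $P\in\R[\theta]$, so commutation is automatic. The only differences are that you split this into two automorphisms, and that choosing $\eta_1$ as the unit vector of $\Delta\cap\fn$ makes explicit the length normalization behind the paper's final ``after a rescaling,'' which tacitly requires choosing $\xi$ with $|(0,\xi)|=|(\sigma,\xi')|/\sigma$.
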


\begin{proof}
Let $\Delta$ be a two-dimensional sub-Riemannian structure on $G(\theta)$ and let 
$\xi\in\R^2$ be a nonzero vector such that $(0,\xi)\in \Delta\cap\fn$. By the bracket-generating condition we must have 
\[
\omega(\theta\xi,\xi)\neq 0 \quad\text{and}\quad \R\cdot (0,\xi)=\Delta\cap\fn.
\]

Since $\omega(\theta\xi,\xi)\neq 0$ and $\omega(\theta\eta,\eta)\neq 0$, the sets 
$\{\xi,\theta\xi\}$ and $\{\eta,\theta\eta\}$ are both bases of $\R^2$. In particular, by the Cayley--Hamilton Theorem, we obtain
\[
\theta^2\xi = (-\det\theta)\cdot \xi + (\tr\theta)\cdot\theta\xi, 
\qquad 
\theta^2\eta = (-\det\theta)\cdot \eta + (\tr\theta)\cdot\theta\eta.
\]

Let $P\in\mathfrak{gl}(\R^2)$ be the linear map defined by $P\xi=\eta$ and $P\theta\xi=\theta\eta$. Then $\det P\neq 0$ and $P\in\mathrm{Gl}(\R^2)$. Also, using the identities above, we see that
\[
P\theta\xi = \theta\eta = \theta P\xi, \qquad 
P\theta^2\xi = \theta^2\eta = \theta P\theta\xi,
\]
which shows that $P\theta$ and $\theta P$ coincide on the basis $\{\xi,\theta\xi\}$. Hence, $P\theta=\theta P$. 

Now choose a vector $(\sigma,\xi')\in\fg(\theta)$, with $\sigma>0$, orthogonal to $(0,\xi)$ in $\Delta$, and define the automorphism
\[
\psi(z,w)=\left(z, Pw - \tfrac{1}{\sigma}\Lambda^{\theta}_z P\xi'\right),
\qquad 
(d\psi)_{(0,0)}=
\begin{pmatrix}
1 & 0 \\
-\tfrac{1}{\sigma}P\xi' & P
\end{pmatrix}.
\]
Since
\[
(d\psi)_{(0,0)}(0,\xi)=(0,\eta),
\qquad
(d\psi)_{(0,0)}(\sigma,\xi')=(\sigma,0),
\]
the subspace $\Delta'=(d\psi)_{(0,0)}\Delta$ can be endowed with an inner product such that $\psi$ is an isometry. With this inner product, $\{(\sigma,0),(0,\eta)\}$ is an orthogonal basis. After a rescaling, we obtain the orthonormal basis $\{(1,0),(0,\eta)\}$ for $\Delta'$, i.e., $\Delta'=\Delta_{\eta}$. This completes the proof.
\end{proof}

\subsection{The Pontryagin Maximum Principle}

Fix $\eta \in \R^2$ such that $\omega(\eta, \theta\eta) \neq 0$, and consider, in the sense of Lemma \ref{unique}, the unique sub-Riemannian structure on $G(\theta)$ defined by
$$\Delta_{\eta}(g) = \mathrm{span}\{X_1(g), X_2(g)\}, \hspace{.5cm} \langle X_i, X_j \rangle = \delta_{i, j}, \hspace{.5cm} i, j = 1, 2,$$
$$X_1(g) = (1, 0), \hspace{.5cm} X_2(g) = (0, \rho_z \eta), \hspace{.5cm} g = (z, w) \in G(\theta).$$

Associated with this choice, we obtain the following optimal control problem:
\begin{equation}
\label{1}
 \dot{g} = u_1 X_1(g) + u_2 X_2(g), \hspace{.5cm} g \in G(\theta), \hspace{.5cm} u = (u_1, u_2) \in \R^2,
\end{equation}
$$g(0) = g_0 = (0, 0), \hspace{.5cm} g(t_1) = g_1,$$
$$\ell = \int_0^{t_1} \sqrt{u_1^2 + u_2^2}\, dt \;\rightarrow\; \mathrm{min}.$$

In the coordinates $(z, w)$ this reads as
$$\dot{z} = u_1, \hspace{.5cm} \dot{w} = u_2 \rho_z \eta,$$
$$g = (z, w) \in G(\theta) = \R \times_{\rho} \R^2, \hspace{.5cm} u = (u_1, u_2) \in \R^2.$$
\begin{equation}
\label{2}
g(0) = g_0 = (0, 0), \hspace{.5cm} g(t_1) = g_1 = (z_1, w_1).
\end{equation}
$$\ell = \int_0^{t_1} \sqrt{u_1^2 + u_2^2}\, dt \;\rightarrow\; \mathrm{min}.$$

The admissible controls $u$ are bounded and measurable, while the admissible trajectories $g$ are Lipschitz continuous. Moreover, by the Cauchy--Schwarz inequality, the minimization of the sub-Riemannian length functional $\ell$ is equivalent to minimizing the energy functional
\begin{equation}
\label{3}
J = \frac{1}{2}\int_0^{t_1} (u_1^2 + u_2^2)\, dt \;\rightarrow\; \mathrm{min}.
\end{equation}

Since
$$X_3 := [X_1, X_2] = [(1, 0), (0, \eta)] = (0, \theta \eta),$$
we have
$$\mathrm{span}\{X_1(g), X_2(g), X_3(g)\} = T_g G(\theta).$$
Thus, the system is full rank and therefore completely controllable on $G(\theta)$. Furthermore, by Filippov’s theorem, the existence of optimal controls for the problem (\ref{1}), (\ref{2}), (\ref{3}) is guaranteed (see, e.g., \cite[Chapter 10]{AAYS}).

Now consider the cotangent bundle $T^*G(\theta)$ of $G(\theta)$, with $\pi : T^*G(\theta) \to G(\theta)$ denoting the canonical projection onto the base. For a Hamiltonian function $h \in C^{\infty}(T^*G(\theta))$, let $\Vec{h} \in \mathrm{Vec}(T^*G(\theta))$ denote its associated Hamiltonian vector field. 

Define the Hamiltonian functions, linear on the fibers, as  
$$h_i(\lambda) = \langle \lambda, X_i(g) \rangle, \hspace{.5cm} i = 1, 2, 3, \hspace{.5cm} \pi(\lambda) = g,$$
and the control-dependent Hamiltonian of the PMP as
$$h_u^{\nu}(\lambda) = u_1 h_1(\lambda) + u_2 h_2(\lambda) + \frac{\nu}{2}(u_1^2 + u_2^2),$$
where $\lambda \in T^*G(\theta)$, $u = (u_1, u_2) \in \R^2$, and $\nu \in \{-1, 0\}$. 

With these notations, the Pontryagin Maximum Principle for the problem under consideration can be stated as follows:

\begin{theorem}[Pontryagin Maximum Principle]
Let $u(t)$ and $g(t)$, $t \in [0, t_1]$, be an optimal control and its corresponding optimal trajectory in the problem (\ref{1}), (\ref{2}), (\ref{3}). Then there exist a Lipschitz continuous curve $\lambda(t) \in T^*G(\theta)$ with $\pi(\lambda(t)) = g(t)$ for all $t \in [0, t_1]$, and a number $\nu \in \{-1, 0\}$ such that, for almost every $t \in [0, t_1]$, the following conditions hold:
$$\dot{\lambda}(t) = \Vec{h}_{u(t)}^{\nu}(\lambda(t)) = u_1(t)\Vec{h}_1(\lambda(t)) + u_2(t)\Vec{h}_2(\lambda(t)),$$
$$h_{u(t)}^{\nu}(\lambda(t)) = \max_{u \in \R^2} h_u^{\nu}(\lambda(t)),$$
$$(\nu, \lambda(t)) \neq 0.$$
\end{theorem}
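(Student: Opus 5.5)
This statement is the general Pontryagin Maximum Principle specialized to the control system (\ref{1})--(\ref{3}), so the plan is to derive it from the standard PMP for optimal control problems on smooth manifolds, as in \cite[Chapter 12]{AAYS}, rather than prove it from scratch. Write the cost as $\varphi(u)=\frac12(u_1^2+u_2^2)$ and the dynamics as $f_u(g)=u_1X_1(g)+u_2X_2(g)$. This is a smooth family of vector fields on $G(\theta)\cong\R^3$, affine in $u$, with unbounded control set $U=\R^2$. The free-endpoint-time issue does not arise, since $t_1$ is fixed in the energy formulation (\ref{3}).

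The first step is to form the extended system on $\R\times G(\theta)$. Adjoin the cost coordinate $y$ with $\dot y=\varphi(u)$, so an optimal pair $(u,g)$ yields a trajectory whose endpoint $(y(t_1),g_1)$ lies on the boundary of the attainable set of the extended system at time $t_1$. The geometric PMP, proved via needle variations and the separation argument, then gives a nontrivial covector curve $(\nu,\lambda(t))$ with $\nu$ constant and $\nu\le 0$. It also gives
\[
\dot\lambda=\vec h^{\nu}_{u(t)}(\lambda),\qquad h^{\nu}_{u(t)}(\lambda(t))=\max_{u\in\R^2}h^{\nu}_u(\lambda(t)),
\]
where $h^\nu_u(\lambda)=\langle\lambda,f_u(g)\rangle+\nu\varphi(u)$. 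Since $\langle\lambda,f_u\rangle=u_1h_1+u_2h_2$, this is exactly the Hamiltonian in the statement.

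Next, normalize the multiplier. If $\nu<0$, rescale $(\nu,\lambda)$ by $1/|\nu|$ to get $\nu=-1$; this is legitimate because the conditions are homogeneous in $(\nu,\lambda)$. Otherwise $\nu=0$. In both cases $(\nu,\lambda(t))\neq0$ follows from nontriviality. We also need the identity
\[
\vec h^{\nu}_{u}=u_1\vec h_1+u_2\vec h_2,
\]
which holds because $\nu\varphi(u)$ does not depend on $\lambda$, so its Hamiltonian vector field is zero. Lipschitz continuity of $\lambda$ follows because $u$ is bounded and the Hamiltonian vector fields are smooth, so $\dot\lambda$ is bounded on the compact set reached.

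The main obstacle is that the textbook PMP is often stated for compact control sets or with $U$ arbitrary but with $L^\infty$ controls. Here the controls are bounded measurable by assumption, so an optimal control takes values in some ball $\bar B_R$. I would first apply the PMP with $U=\bar B_R$, obtaining the maximum condition over $\bar B_R$.

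It then remains to pass from the maximum over $\bar B_R$ to the maximum over $\R^2$, splitting into the two cases of $\nu$:
\begin{itemize}
\item In the normal case $\nu=-1$, the function $u\mapsto u_1h_1+u_2h_2-\frac12|u|^2$ is strictly concave with unique maximizer $u=(h_1,h_2)$. A maximum attained at an interior point of $\bar B_R$ is therefore global, and enlarging $R$ if necessary (using continuity of $h_i(\lambda(t))$) ensures this.
\item In the abnormal case $\nu=0$, maximization of the linear function $u_1h_1+u_2h_2$ over all of $\R^2$ forces $h_1=h_2=0$ along the curve. One checks this by the standard argument: the maximum principle holds for every $R$ larger than $\|u\|_\infty$, so $u(t)$ maximizes the linear form over every such ball. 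This is possible only if the form vanishes, and then the maximum over $\R^2$ is trivially attained.
\end{itemize}
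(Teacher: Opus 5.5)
Your proposal is correct and takes the same route as the paper, which gives no separate proof but states this as the standard PMP (as in \cite{AAYS}) specialized to the fixed-time energy problem (\ref{1})--(\ref{3}). One point in the passage from $\bar B_R$ to $\R^2$ needs tidying: the covector and $\nu$ furnished by the PMP may depend on $R$, so instead of ``enlarging $R$'' or ``every $R$'', fix one $R>\|u\|_\infty$; then $u(t)$ is an interior maximizer on $\bar B_R$ for a.e.\ $t$, which immediately gives $u(t)=(h_1(\lambda(t)),h_2(\lambda(t)))$ when $\nu=-1$ and $h_1(\lambda(t))=h_2(\lambda(t))=0$ when $\nu=0$.
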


Since the optimization problem is contact, the abnormal case $\nu=0$ admits only constant optimal trajectories. Hence, all optimal trajectories are strictly normal (see, for instance, \cite[Proposition 4.38]{AADBUB}). Therefore, we restrict our attention to the normal case $\nu=-1$. In this setting, the maximality condition implies that normal extremals satisfy
$$u_i(t)=h_i(\lambda(t)), \hspace{.5cm}i=1,2,\; t\in [0, t_1].$$
Consequently, $\lambda(t)$ is a solution of the maximized Hamiltonian system
\begin{equation}
    \label{maximized}
    \dot{\lambda}=\Vec{H}(\lambda), \hspace{.5cm}\lambda\in T^*G(\theta),
\end{equation}
where $H(\lambda)=(h_1(\lambda)^2+h_2(\lambda)^2)/2$.  

To simplify the analysis, we reduce the problem by one dimension by restricting to the level surface $H=1/2$. This reduction yields convenient expressions for the vertical and horizontal components of the maximized Hamiltonian.  

In fact, write as previously
$$\theta^2\eta=-\det\theta\cdot\eta+\tr\theta\cdot\theta\eta.$$
Thus,  
$$[X_1,X_2]=X_3, \hspace{.5cm}[X_2,X_3]=0, \hspace{.5cm}[X_1,X_3]=-\det\theta \cdot X_2+\tr\theta \cdot X_3.$$

Since $\{h_1,h_2,h_3\}$ provides a system of vertical coordinates on $T^*G(\theta)$, the Hamiltonian equations (\ref{maximized}) becomes, in coordinates,
$$\dot{h}_1=-h_2h_3, \hspace{.5cm}\dot{h}_2=h_1h_3, \hspace{.5cm}\dot{h}_3=(-\det\theta\cdot h_2+\tr\theta\cdot h_3)h_1,$$
$$\dot{z}=h_1, \hspace{.5cm}\dot{w}=h_2\rho_z\eta.$$

Restricting to trajectories on the level surface $H=1/2$ (corresponding to arc-length parametrized extremals),
$$C=T^*_{(0,0)}G(\theta)\cap H^{-1}\left(\frac{1}{2}\right),$$
allows us to introduce the polar coordinates
$$h_1=\cos\varphi, \hspace{.5cm} h_2=\sin\varphi, \hspace{.5cm} h_3=r.$$
This yields the following result:

\begin{proposition}
\label{equations}
The vertical part of the maximized Hamiltonian system (\ref{maximized}) of the sub-Riemannian structure $\Delta_{\eta}$ on $G(\theta)$ takes the form of a perturbed pendulum:
\begin{equation}
\label{verticaldouble}
\dot{\varphi}=r, \hspace{.5cm}\dot{r}=-\tfrac{\det\theta}{2}\sin(2\varphi)+\tr\theta\,r\cos\varphi,\hspace{.5cm} (\varphi, r)\in \mathbb{S}^1\times\R.
\end{equation}
The corresponding horizontal dynamics is given by
\begin{equation}
\label{horizontaldouble}
\dot{z}=\cos\varphi, \hspace{.5cm} \dot{w}=\sin\varphi\cdot\rho_z\eta.
\end{equation}
\end{proposition}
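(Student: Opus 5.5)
The plan is to derive the vertical equations directly from the Poisson-bracket form of the Hamiltonian system restricted to the cylinder $C$, and then to pass to the polar coordinates $(\varphi,r)$. The step that needs the most care is the first one, namely justifying the coordinate form of (\ref{maximized}) stated just before the proposition. For the fiber-linear Hamiltonians we have $\{h_i,h_j\}(\lambda)=\langle\lambda,[X_i,X_j](g)\rangle$. Since $H=\tfrac12(h_1^2+h_2^2)$, this gives
\[
\dot h_i=\{H,h_i\}=h_1\{h_1,h_i\}+h_2\{h_2,h_i\}.
\]
We then substitute the bracket relations $[X_1,X_2]=X_3$, $[X_2,X_3]=0$ and $[X_1,X_3]=-\det\theta\, X_2+\tr\theta\, X_3$. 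These relations follow from the Cayley--Hamilton identity $\theta^2\eta=-\det\theta\,\eta+\tr\theta\,\theta\eta$ together with the left-invariance of the $X_i$. The result is
\[
\dot h_1=-h_2h_3,\qquad \dot h_2=h_1h_3,\qquad \dot h_3=(-\det\theta\, h_2+\tr\theta\, h_3)h_1,
\]
where the signs must be fixed using the paper's convention for $\vec h$.

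Next, $H$ is a first integral, so the level set $H=1/2$ is invariant and the parametrization $h_1=\cos\varphi$, $h_2=\sin\varphi$, $h_3=r$ is legitimate along trajectories, with $\varphi$ Lipschitz. Differentiating $h_1=\cos\varphi$ and $h_2=\sin\varphi$ gives
\[
-\sin\varphi\,\dot\varphi=-\sin\varphi\, r,\qquad \cos\varphi\,\dot\varphi=\cos\varphi\, r.
\]
Multiplying the first identity by $-\sin\varphi$, the second by $\cos\varphi$, and adding eliminates the case distinction on where $\sin\varphi$ or $\cos\varphi$ vanishes, and yields $\dot\varphi=r$. Then
\[
\dot r=(-\det\theta\sin\varphi+\tr\theta\, r)\cos\varphi=-\tfrac{\det\theta}{2}\sin(2\varphi)+\tr\theta\, r\cos\varphi,
\]
using $\sin\varphi\cos\varphi=\tfrac12\sin 2\varphi$. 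This is (\ref{verticaldouble}).

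For the horizontal part, the normal extremal satisfies $u_i=h_i(\lambda(t))$, and $\pi(\lambda(t))=g(t)$ solves (\ref{1}). In coordinates this reads $\dot z=h_1$ and $\dot w=h_2\rho_z\eta$, which becomes (\ref{horizontaldouble}) after substituting the polar coordinates. The computation is routine once the sign convention for Poisson brackets is fixed. I expect the only real obstacle to be checking that $\dot h_i=\{H,h_i\}$ with $\{h_i,h_j\}=\langle\lambda,[X_i,X_j]\rangle$ produces exactly the signs displayed above, and I would verify this against the classical case $\theta=0$, where the system should reduce to the Heisenberg geodesic equations.
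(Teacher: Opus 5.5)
Your argument is correct and is essentially the paper's own derivation. It uses the bracket relations obtained from Cayley--Hamilton and the coordinate system $\dot h_1=-h_2h_3$, $\dot h_2=h_1h_3$, $\dot h_3=(-\det\theta\, h_2+\tr\theta\, h_3)h_1$, followed by the polar substitution on $H=1/2$ and $u_i=h_i$ for the horizontal part; these signs do not depend on any convention, since along a normal extremal $\dot h_X=\langle\lambda,[h_1X_1+h_2X_2,X]\rangle$. The only slip is in your proposed sanity check: $\theta=0$ gives the abelian group $\R^3$ with $X_3=0$, which is not bracket-generating, whereas the Heisenberg group comes from a nonzero nilpotent $\theta$, for which the system correctly reduces to $\dot\varphi=r$, $\dot r=0$.
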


Normal extremals, parametrized by arc length, are described by the \textbf{exponential map}
$$\mathrm{Exp}: C\times \R_+\rightarrow G(\theta), \hspace{.5cm} \mathrm{Exp}(\lambda,T):=\pi(\lambda(T)).$$
By Proposition \ref{equations}, we have $\lambda(t)=((\varphi(t), r(t)),(z(t),w(t)))$, where $(\varphi(t), r(t))$ solve the vertical system (\ref{verticaldouble}) with initial condition $\lambda$, and $z(t),w(t)$ solve the horizontal system (\ref{horizontaldouble}) with initial condition $(0,0)$. In particular, the exponential map depends explicitly on the vertical dynamics. Since this dynamics reduces to a perturbed pendulum, no closed-form expression for the exponential map is available. Therefore, to obtain information about the behavior of the exponential map, we proceed in the next sections with a qualitative analysis of the solutions of (\ref{verticaldouble}).  

\begin{remark}
It is worth noting that expression (\ref{verticaldouble}) for the vertical dynamics was previously obtained, up to a phase shift of $\pi/2$, in \cite[Section 5]{VAGV} by analyzing the structure constants of the Lie algebra $\fg(\theta)$. However, an additional dynamical behavior of the solutions of (\ref{verticaldouble}) was overlooked in \cite{VAGV} and will play a crucial role in our subsequent analysis (see Proposition \ref{heteroclinic}). 
\end{remark}

\section{The Vertical Part of the Hamiltonian System}

In this section, we study the vertical component of the Hamiltonian system obtained in (\ref{verticaldouble}).  
Observe that the equations in (\ref{verticaldouble}) are equivalent to the second-order ODE
\begin{equation}
\label{HamiltonianEquivalent}
    \Ddot{\varphi}=-\tfrac{\det\theta}{2}\sin(2\varphi)+\tr\theta \,\dot{\varphi}\cos\varphi,\hspace{.5cm} (\varphi(0), \dot{\varphi}(0))\in \mathbb{S}^1\times \R,
\end{equation}
or the differential system
\begin{equation}
\label{HamiltonianEquivalent1}
\begin{cases}
\dot{\varphi}=r,\hspace{.5cm}\\
    \dot{r}=-\tfrac{\det\theta}{2}\sin(2\varphi)+\tr\theta \,r\,\cos\varphi,
    \end{cases}\quad(\varphi, r)\in \mathbb{S}^1\times \R,
\end{equation}
whose dynamics we now proceed to analyze. In particular, we focus on the \emph{regular case}, defined by the condition $\det\theta\cdot\tr\theta\neq 0$.  

It is worth noting that a complete characterization of certain non-regular cases has already appeared in the literature. Specifically, such results were obtained in \cite{MoSach, Sach1, Sach2} for the group of right motions $SE(2)$, which appears as a quotient of $G(\theta)$, where
\[
\theta=\begin{pmatrix}
    0 & -1 \\
    1 & 0
\end{pmatrix},
\]
and in \cite{ButtSach1, ButtSach2} for the group of motions of the pseudo-Euclidean plane $SH(2)=G(\theta)$, where
\[
\theta=\begin{pmatrix}
    1 & 0 \\
    0 & -1
\end{pmatrix}.
\]
The essential difference in our setting lies in the fact that, in those earlier cases, the perturbation term vanishes because 
$\tr\theta=0$, and the authors exploit this fact to integrate equation 
(\ref{HamiltonianEquivalent}) in terms of elliptic functions and to analyze the solutions explicitly. By contrast, in our setup the perturbation term satisfies $\tr\theta\neq 0$, and our approach relies on a qualitative analysis of the integral form of the solutions and of the properties of the period function. This allows us to treat all regular groups simultaneously.

\subsection{Decomposition of the phase-space}

The previous system admits singularities at the points 
$$\left(\frac{k\pi}{2}, 0\right), k\in\Z.$$
At such points, we have the associated Jacobian matrix
\begin{equation}\label{jacobian}
    J_k:=J\left(\frac{k\pi}{2}, 0\right)=\left(\begin{array}{cc}
    0& 1 \\
   (-1)^{k+1}\det\theta  &  \tr\theta \cos\left(\frac{k\pi}{2}\right) 
\end{array}\right),
\end{equation}
whose characteristic polynomial and discriminant are
$$p(\lambda)=\lambda^2-\tr\theta \cos\left(\frac{k\pi}{2}\right) \lambda+(-1)^k\det\theta\hspace{.5cm}\mbox{ and }\hspace{.5cm}\Delta_k=(\tr\theta)^2\cos^2\left(\frac{k\pi}{2}\right)-4(-1)^k\det\theta.$$
Since,
$$\cos\left(\frac{k\pi}{2}\right)=0\hspace{.5cm}\iff\hspace{.5cm}
    k\mbox{ is odd},$$
we get that
\begin{itemize}
    \item $\det\theta>0$: In this case, $\Delta_k>0$ for $k=-1, 1$ and the points $(\pm\pi/2, 0)$ are saddles. The points $(0, 0)$ and  $(\pi, 0)$ attractors and repellers, or vice-versa, depending of the sign of $\tr\theta$. 

    \item $\det\theta<0$: In this case, $\Delta_k<0$ for $k=-1, 1$ and the points $(\pm\pi/2, 0)$ can be centers or weak focus (we will see that they are centers). The points $(0, 0)$ and  $(\pi, 0)$ saddles.
\end{itemize}

\begin{proposition}
\label{heteroclinic}
Consider system \eqref{HamiltonianEquivalent} defined on the cylinder $\mathbb{S}^1\times \R$, with $\mathbb{S}^1=[-\pi,\pi]/\sim$, where $\sim$ identifies $-\pi$ and $\pi$. In this case, one has the following set equilibria $$\{p_1=(-\pi,0),p_2=(-\pi/2,0),p_3=(0,0),p_4=(\pi/2,0)\},$$ where $(-\pi,0)$ and $(\pi,0)$ coresponds to the same point $p_1$.
\begin{itemize}

\item[(a)] Assume $\det \theta < 0$. Then $p_1$ and $p_3$ are saddles, while $p_2$ and $p_4$ are centers. In this case, there exist four heteroclinic orbits:  
\begin{itemize}
    \item $H^+_{1,3}\subset (-\pi,0)\times \mathbb{R}_{>0}$, connecting $p_1$ to $p_3$.
    \item $H^+_{3,1}\subset (0,\pi)\times \mathbb{R}_{>0}$, connecting $p_3$ to $p_1$.
    \item $H^-_{1,3}\subset (-\pi,0)\times \mathbb{R}_{<0}$, connecting $p_1$ to $p_3$.
    \item $H^-_{3,1}\subset (0,\pi)\times \mathbb{R}_{<0}$, connecting $p_3$ to $p_1$.
\end{itemize}

Apart from the equilibria and heteroclinic connections, all other trajectories are periodic. These periodic orbits are organized into four period annuli: two bounded ones and two unbounded ones.  In $[-\pi,0]\times \mathbb{R}$, the set  
$
\gamma_1 = H^+_{1,3}\cup H^-_{3,1}\cup\{p_1,p_3\}
$
forms a simple closed curve (contractible in the cylinder), which corresponds to the outer boundary of the period annulus surrounding the center $p_2$. Similarly, in $[0,\pi]\times \mathbb{R}$, the set  
$
\gamma_3 = H^+_{3,1}\cup H^-_{1,3}\cup\{p_1,p_3\}
$
forms a simple closed curve (contractible in the cylinder), which corresponds to the outer boundary of the period annulus surrounding the center $p_4$. Finally, $\gamma^+=H^+_{1,3}\cup H^+_{3,1}\cup\{p_1,p_3\}$ forms a simple closed curve (non-contractible in the cylinder) that constitutes the inner boundary of the unbounded period annulus contained in $(-\pi,\pi)\times \mathbb{R}_{>0}$. Similarly, $\gamma^-=H^-_{1,3}\cup H^-_{3,1}\cup\{p_1,p_3\}$ forms a simple closed curve (non-contractible in the cylinder) that constitutes the inner boundary of the unbounded period annulus contained in $(-\pi,\pi)\times \mathbb{R}_{<0}$ (see Figure~\ref{Figure1}).

\item[(b)] Assume $\det \theta > 0$. Then, $p_2$ and $p_4$ are saddle points, while $p_1$ and $p_3$ are both foci when $(\tr\theta)^2 - 4\det\theta < 0$, or both nodes when $(\tr\theta)^2 - 4\det\theta > 0$. Moreover, $p_1$ is attracting (resp. repelling) and $p_3$ is repelling (resp. attracting) whenever $\tr\theta > 0$ (resp. $\tr\theta < 0$). Furthermore, there exist two homoclinic orbits:
\begin{itemize}
    \item $H_2$ connecting $p_2$ to itself, which is contained within $\mathbb{S}^1 \times \mathbb{R}_{>0}$ (resp. $\mathbb{S}^1 \times \mathbb{R}_{<0}$) provided that $\tr\theta>0$ (resp. $\tr\theta<0$).
    \item $H_4$ connecting $p_4$ to itself, which is contained within $\mathbb{S}^1 \times \mathbb{R}_{<0}$ (resp. $\mathbb{S}^1 \times \mathbb{R}_{>0}$) provided that $\tr\theta>0$ (resp. $\tr\theta<0$).
\end{itemize}
In addition, let $\gamma_2 = H_2 \cup \{p_2\}$ and $\gamma_4 = H_4 \cup \{p_4\}$, which are simple closed curves (non-contractible in the cylinder), and let $K$ denote the compact region bounded by $\gamma_2$ and $\gamma_4$. Then, all trajectories in $K$ are non-periodic and, apart from the homoclinic connections and equilibria, each trajectory converges to the attracting focus/node as $t \to +\infty$, and to the repelling focus/node as $t \to -\infty$. Finally, all trajectories lying outside the region $K$ are periodic (see Figure~\ref{Figure2}).


\end{itemize}
\end{proposition}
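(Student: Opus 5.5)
The plan rests on two facts: the system has a reversing symmetry, and away from $\cos\varphi=0$ it reduces to a planar homogeneous equation. First, I will check directly from \eqref{HamiltonianEquivalent1} that:
\begin{itemize}
\item the reflection $R(\varphi,r)=(\pi-\varphi,r)$, combined with time reversal, maps solutions to solutions (the system is \emph{reversible} with respect to the fixed lines $\varphi=\pm\pi/2$);
\item the map $S(\varphi,r)=(-\varphi,-r)$ is an orientation-preserving symmetry, i.e.\ it maps solutions to solutions without reversing time.
\end{itemize}
Second, on each strip where $\cos\varphi$ has constant sign, i.e.\ $(-\pi/2,\pi/2)$ and $(\pi/2,3\pi/2)$, I change variables to $s=\sin\varphi\in(-1,1)$. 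Orbits then satisfy
\[
r\,\frac{dr}{ds}=-\det\theta\, s+\tr\theta\, r ,
\]
which is a homogeneous equation independent of the strip. The same orbit equation holds on both strips; only the time orientation differs through the sign of $\cos\varphi$. Substituting $r=s\,u$ separates variables and gives an explicit first integral $F(s,r)$ on each strip. Since $\dot\varphi=r$, every orbit in $r>0$ (resp.\ $r<0$) moves monotonically to the right (resp.\ left). So the global phase portrait is obtained by gluing level curves of $F$ across the lines $s=\pm1$, i.e.\ $\varphi=\pm\pi/2$.

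For the local classification I use the Jacobians $J_k$ already computed. For $\det\theta>0$, the points $p_2,p_4$ are saddles and $p_1,p_3$ are foci or nodes, with stability given by the sign of $\tr\theta\cos(k\pi/2)$. For $\det\theta<0$, $p_1,p_3$ are saddles. The points $p_2,p_4$ lie on the fixed lines of $R$, so any orbit near them crossing $\varphi=\pm\pi/2$ twice is symmetric and hence closed. This shows they are centers and not weak foci.

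For case (a), I follow the unstable branch of $p_1$ entering $(-\pi,0)\times\R_{>0}$ and show via the first integral $F$ that it hits $\varphi=-\pi/2$ at some $r_0>0$. Reversibility under $R$ (centered at $-\pi/2$) then forces it to continue into the mirror-image branch, which is the stable branch of $p_3$; this gives $H^+_{1,3}$. Applying $S$ and $R$ yields the other three heteroclinics. Orbits inside $\gamma_1$ or $\gamma_3$ cross the fixed line twice and are therefore periodic. For orbits above $\gamma^+$, monotonicity of $\varphi$ and the symmetry across $\varphi=\pm\pi/2$ give closed non-contractible orbits.

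For case (b), I trace the separatrices of the saddle $p_2$ in $r>0$ when $\tr\theta>0$. I expect to show that the unstable branch leaving $p_2$ to the right crosses $\varphi=\pi/2$ and, by reversibility about $\varphi=\pi/2$, reaches $\varphi=3\pi/2\equiv-\pi/2$ as the mirror of the stable branch. This closes up the non-contractible homoclinic $H_2$, and $H_4=S(H_2)$. Orbits outside $K$ cross the fixed lines transversally at least twice, so they are periodic. In the interior of $K$ I use Poincaré--Bendixson together with a Dulac/Bendixson argument with divergence $\tr\theta\cos\varphi$ and a suitable multiplier, to exclude closed orbits. Then every orbit tends to the attractor forward in time and to the repeller backward in time.

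The main obstacle is the global separatrix analysis: proving that each separatrix actually reaches the fixed line of $R$, rather than spiralling into a focus first, and deciding on which side it lands. This amounts to estimating the explicit first integral on the relevant strip. I would first try comparing the separatrix with the invariant lines $r=\lambda s$ of the homogeneous equation, where $\lambda^2-\tr\theta\,\lambda+\det\theta=0$ (real roots exactly in the node or saddle cases). These lines act as barriers in the $(s,r)$-plane, and in the focus case I would instead use a Lyapunov-type comparison.
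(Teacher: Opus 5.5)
Your plan is sound and takes a genuinely different route from the paper. The paper argues purely qualitatively. It combines three ingredients: a boundedness property (A) of orbits on strips of width $2\pi$, asserted via unspecified trapping regions; reversibility about $\varphi=\pm\pi/2$; and Poincar\'e--Bendixson. In case (b) it simply invokes Poincar\'e--Bendixson to conclude that the upper unstable branch of $p_2$ reaches $\varphi=\pi/2$.

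Your substitution $s=\sin\varphi$ is stronger than you exploit. On each strip the field is $\cos\varphi$ times the \emph{linear} field $(s,r)\mapsto(r,-\det\theta\,s+\tr\theta\,r)$, whose characteristic polynomial is that of $\theta$. Consequently:
\begin{itemize}
\item Property (A) becomes automatic.
\item In case (a) the lines $r=\lambda s$ are not just barriers. The curves $r=\lambda_\pm\sin\varphi$, with $\lambda_\pm$ the real eigenvalues of $\theta$ (of opposite signs), are invariant on the whole cylinder. Their union is exactly the four heteroclinic orbits together with $p_1,p_3$.
\item In case (b), the obstacle you single out is removed by $V=r^2+\det\theta\sin^2\varphi$, for which $\dot V=2\tr\theta\,r^2\cos\varphi$. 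For $\tr\theta>0$, along the branch leaving $p_2$ to the upper right one has $V\geq V(p_2)=\det\theta$, i.e.\ $r^2\geq\det\theta\cos^2\varphi>0$ on $|\varphi|<\pi/2$. So the branch reaches $\varphi=\pi/2$ with $r>0$, for foci and nodes alike.
\end{itemize}
Your route thus gives explicit control precisely where the paper's argument is thinnest. The paper's route, in exchange, uses no special structure and would survive perturbations that do not factor through $\cos\varphi$.

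Two steps need fixing.

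First, $S$ and $R$ do not produce the other three heteroclinics from $H^+_{1,3}$. $S$ fixes $p_1,p_3$ and preserves time, while $R$ swaps them and reverses time. So both map connections $p_1\to p_3$ to connections $p_1\to p_3$; indeed both preserve every curve $r=\lambda\sin\varphi$. You only get $H^+_{1,3}$ and $S(H^+_{1,3})$. The two connections $p_3\to p_1$ lie on $r=\lambda_+\sin\varphi$, with $\lambda_+\neq-\lambda_-$ because $\lambda_++\lambda_-=\tr\theta\neq 0$. You must obtain them by rerunning your argument on the unstable branch of $p_3$, which crosses $\varphi=\pi/2$.

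Second, a Dulac multiplier on $K$ is neither at hand (the divergence changes sign and $K$ is an annulus) nor needed. For $\tr\theta>0$, the interior of $K$ meets $\varphi=\pi/2$ only where $r>0$ and meets $\varphi=-\pi/2$ only where $r<0$. Hence all crossings of the first line are rightward and all crossings of the second are leftward. A closed curve has the same algebraic intersection number with both lines, so a closed orbit in $K$ meets neither line. It therefore lies in one open strip, where the divergence $\tr\theta\cos\varphi$ has constant sign and plain Bendixson applies.

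Finally, the separatrices of $p_2,p_4$ lying inside $K$ are exceptions to your final convergence claim, just as they are to the statement itself.
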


\begin{proof}
All conclusions follow from two key properties of the differential system \eqref{HamiltonianEquivalent1}: 

(A) Any orbit of \eqref{HamiltonianEquivalent1} restricted to $[-\pi+a,\pi+a]\times \mathbb{R}$, for any $a>0$, is bounded. This follows from the construction of suitable compact regions which, in view of the behavior of the vector field along its boundary, can be shown to confine the orbit to its interior.


(B) The differential system \eqref{HamiltonianEquivalent1} is reversible with respect to the lines $\varphi=\pi/2$ and $\varphi=-\pi/2$. 
Such a property implies that if $(\varphi(t), r(t))$ is a solution of \eqref{HamiltonianEquivalent1} satisfying $\varphi(0)=\pm\pi/2$, then
\begin{equation}\label{reverimpli}
\varphi(t)+\varphi(-t)=\pm\pi,
\end{equation}
for every $t\in\mathbb{R}$.  
Consequently, if a solution intersects the line $\varphi=\pi/2$ at two distinct times, or the line $\varphi=-\pi/2$ at two distinct times, or if it intersects both lines $\varphi=\pi/2$ and $\varphi=-\pi/2$, then the solution is periodic. Moreover, if a solution approaches asymptotically one of the lines $\varphi=\pi/2$ or $\varphi=-\pi/2$ and intersects the other, then it corresponds to a homoclinic connection.

In the case $\det\theta<0$, since the equilibrium point $p_2$ is monodromic, the orbits in its neighborhood are periodic. Therefore, by the Poincaré–Bendixson Theorem, the stable and unstable manifolds of the saddle point $p_1$ must either approach $p_3$ or intersect the line $\varphi = 0$, since they can neither converge to $p_2$ nor become unbounded before reaching $\varphi = 0$ because of (A). Analogously, the stable and unstable manifolds of the saddle point $p_3$ must either approach $p_1$ or intersect the line $\varphi = 0\equiv\pi$. Hence, by uniqueness of solutions, the only remaining possibility is that the unstable (respectively, stable) manifold of $p_1$ coincides with the stable (respectively, unstable) manifold of $p_3$, thereby establishing the existence of heteroclinic connections between the two saddle points that bound the period annulus surrounding $p_2$ and $p_4$ (see Figure \ref{Figure1}). Finally, since every solution outside $\gamma_1 \cup \gamma_3$ is bounded, the Poincaré–Bendixson Theorem implies that it must intersect the lines $\varphi = -\pi/2$ and $\varphi = \pi/2$, and hence corresponds to a periodic orbit.

In the case $\det \theta > 0$, the equilibria $p_2$ and $p_4$ are saddles, whereas $p_1$ and $p_3$ are both hyperbolic foci or nodes. Their stability is determined by the traces of the Jacobian matrices $J_1$ and $J_3$, respectively, given in \eqref{jacobian}. Assume $\tr \theta > 0$, the case $\tr \theta < 0$ can be treated analogously. Under this assumption, $p_1$ is attracting and $p_3$ is repelling. One branch of the unstable manifold of $p_2$ converges to $p_1$, while, by the Poincaré–Bendixson Theorem, the other branch must intersect the line $\varphi = \pi/2$. Since this trajectory approaches the line $\varphi = -\pi/2$ asymptotically, it follows from the consequences of \eqref{reverimpli} that there exists a homoclinic connection of $p_2$ to itself. An analogous argument applies to the equilibrium $p_4$ (see Figure~\ref{Figure2}). Finally, since every orbit outside $K$ restricted to $[-\pi/2,3\pi/2]\times\R$ is bounded and does not converge to any equilibria, the Poincaré–Bendixson Theorem implies that each such solution must intersect the lines $\varphi = -\pi/2$ and $\varphi = \pi/2$, and therefore corresponds to a periodic orbit.
\end{proof}



\begin{figure}[h!]
	\centering
	\begin{subfigure}{.5\textwidth}
		\centering
		\begin{overpic}
        [width=0.8\linewidth]{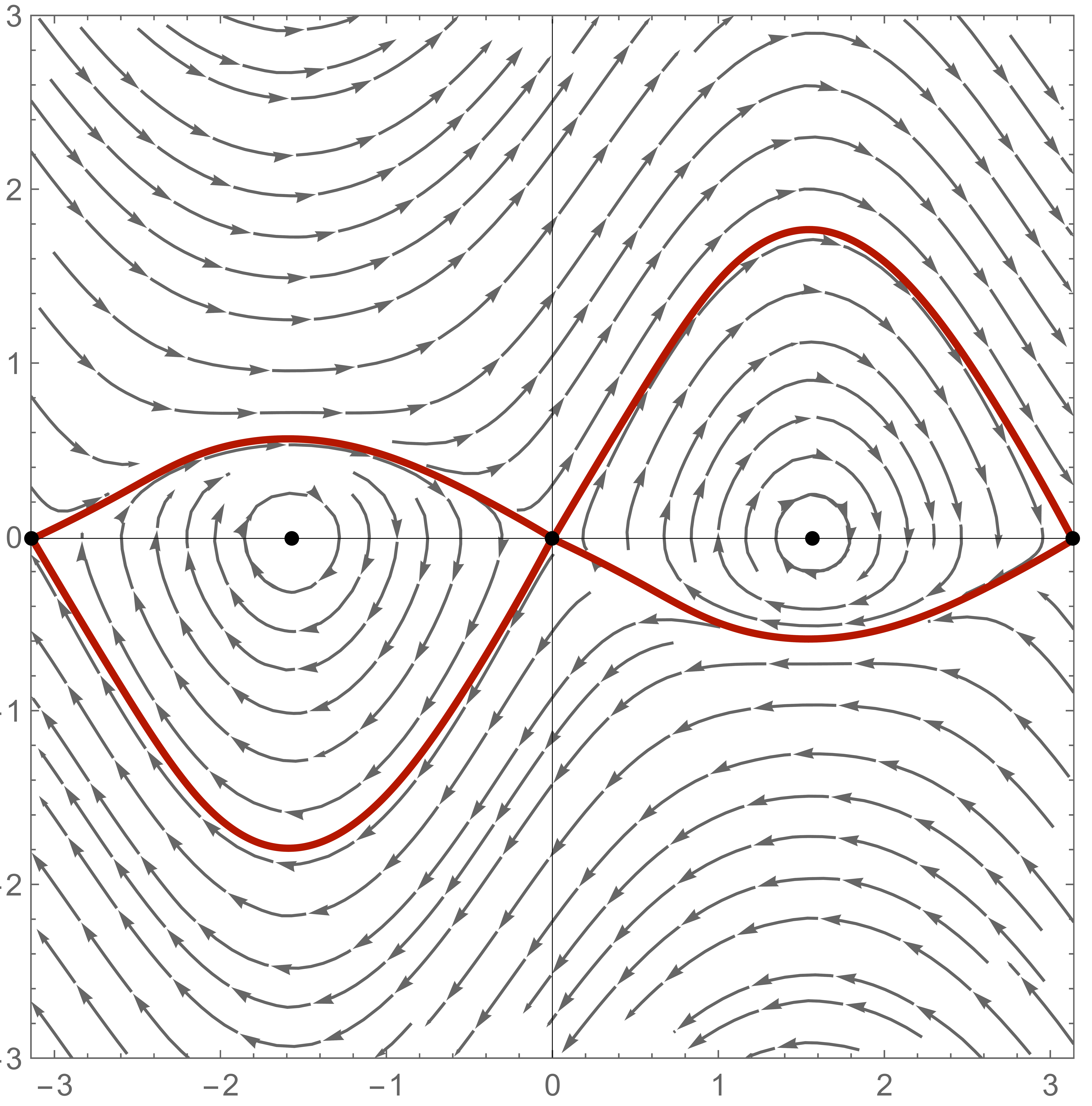}
        \put(-2,54){$p_1$}
        \put(24,47){$p_2$}
        \put(49,47){$p_3$}
        \put(72,47){$p_4$}
        \put(98,54){$p_1$}
        \put(23,62){$H_{1,3}^+$}
        \put(23,18){$H_{3,1}^-$}
        \put(72,37){$H_{1,3}^-$}
        \put(72,81){$H_{3,1}^+$}
        \end{overpic}
		\caption{Illustration of the case $\det\theta<0$; \vspace{.4cm}}
		\label{Figure1}
	\end{subfigure}%
	\begin{subfigure}{.5\textwidth}
		\centering
		\begin{overpic}
        [width=0.8\linewidth]{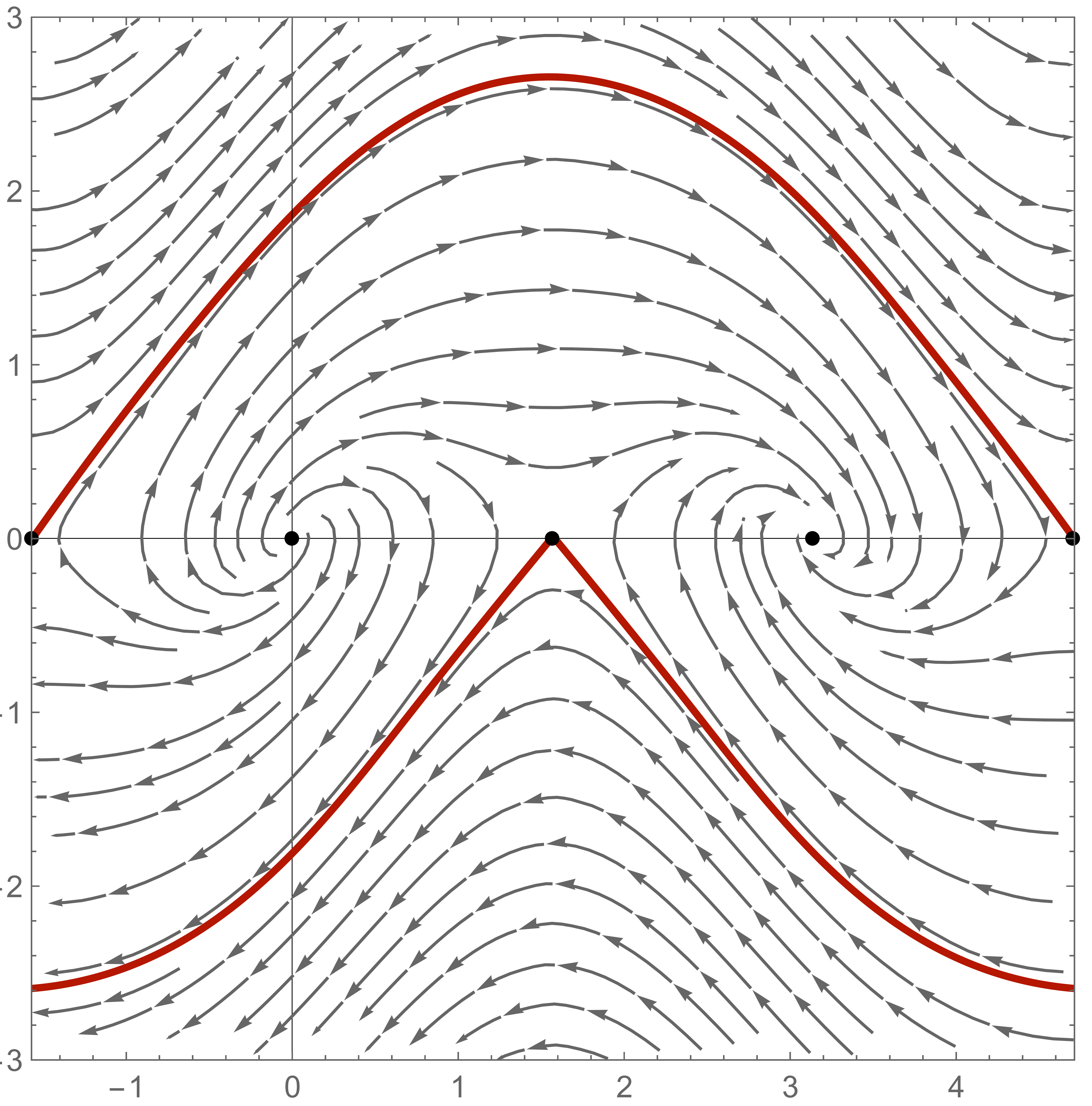}
         \put(3,47){$p_2$}
        \put(24,47){$p_3$}
        \put(48,47){$p_4$}
        \put(72,47){$p_1$}
        \put(92.5,47){$p_2$}
        \put(47,87){$H_2$}
        \put(27,20){$H_4$}
        \end{overpic}
		\caption{Illustration of the case $\det\theta>0$, where, for clarity, the phase space is shown for $\varphi \in (-\pi/2, 3\pi/2)$.}
		\label{Figure2}
	\end{subfigure}
	\caption{Phase space of system \eqref{HamiltonianEquivalent}.}
\end{figure}

\subsubsection{The period of the trajectories of the pendulum}

In this section, we study several properties of the periods of the solutions of (\ref{HamiltonianEquivalent}) that will be essential for determining the first Maxwell time. We begin with an analysis of the behavior of the period as one approaches the heteroclinic orbits and/or the equilibria of the system.

\begin{proposition}
Consider system \eqref{HamiltonianEquivalent} defined on the cylinder $C=\mathbb{S}^1\times \R$. Set $A_{\theta}:=C\setminus (\gamma_1\cup \gamma_3)$ for $\det\theta<0$, and $A_{\theta}:=C\setminus K$ for $\det\theta>0$.
Then, there exists a continuous function $\tau:A_{\theta}\to \R$ mapping each point of $A_{\theta}$, which is not an equilibrium of \eqref{HamiltonianEquivalent},  to the period of the solution of \eqref{HamiltonianEquivalent} passing through it. Moreover, such a function satisfies the following:
\begin{itemize} 

\item If $\det\theta<0$, then $\tau(p_2)=\tau(p_4)=2\pi/\sqrt{-\det\theta}$, $\tau(\lambda)\to \infty$ as $\lambda$ approaches $\gamma_1\cup \gamma_2$, and $\tau(\lambda)\to 0$ as $|\lambda|\to \infty$.

\item If $\det\theta>0$, then  $\tau(\lambda)\to +\infty$ as $\lambda$ approaches $\gamma_2\cup \gamma_4$ and $\tau(\lambda)\to 0$ as $|\lambda|\to \infty$.
\end{itemize}

\end{proposition}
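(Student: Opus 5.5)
We will define $\tau$ through transversal sections and the implicit function theorem. Then we treat the three limiting regimes separately: the equilibria, the boundary curves, and $|\lambda|\to\infty$. By Proposition \ref{heteroclinic}, every non-equilibrium point of $A_\theta$ lies on a periodic orbit. In the unbounded annuli the vector field satisfies $\dot\varphi=r\neq 0$, so every vertical line $\{\varphi=c\}$ is a global transversal section. In the bounded annuli around the centers ($\det\theta<0$), the half-line $\{r=0,\ \varphi>\pi/2\}$ (resp. $\varphi>-\pi/2$), intersected with the annulus, is transversal, because there $\dot r=-\tfrac{\det\theta}{2}\sin 2\varphi\neq 0$ for $\varphi\in(\pi/2,\pi)$. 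On such a section $\Sigma$ the first return time $T(\lambda)$ is smooth by the implicit function theorem applied to $(t,\lambda)\mapsto \varphi_t(\lambda)-c$ (resp. $r_t(\lambda)$), since the crossing at the return time is transversal. The period of a point $\lambda$ off $\Sigma$ equals $T$ at the point where its orbit meets $\Sigma$, and this point depends continuously on $\lambda$. Hence $\tau$ is continuous, in fact smooth, on $A_\theta$ minus the equilibria.

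At the centers $p_2,p_4$ (case $\det\theta<0$), the Jacobian \eqref{jacobian} with $k=\pm1$ has zero trace, since $\cos(\pm\pi/2)=0$, and determinant $-\det\theta>0$. Its eigenvalues are therefore $\pm i\sqrt{-\det\theta}$. We will use the standard fact that the period function of a nondegenerate center extends continuously to the center with value $2\pi/\omega$, where $\pm i\omega$ are the eigenvalues. Concretely, we pass to polar coordinates $(\rho,\alpha)$ centered at $p_4$ and write $d\alpha/dt=\omega+O(\rho)$, which is valid because the linear part is a center. Integrating in $\alpha$ over $[0,2\pi]$ then gives $\tau=2\pi/\omega+O(\rho)$. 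This yields $\tau(p_2)=\tau(p_4)=2\pi/\sqrt{-\det\theta}$.

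For $|\lambda|\to\infty$, which on the cylinder means $|r|\to\infty$, we will write the orbit as a graph over $\varphi$:
\[
\frac{dr}{d\varphi}=-\frac{\det\theta\,\sin 2\varphi}{2r}+\tr\theta\cos\varphi,
\qquad\text{so}\qquad
\Big|\frac{dr}{d\varphi}\Big|\le |\tr\theta|+1 \ \text{ whenever } |r|\ge |\det\theta|.
\]
Over one lap $\varphi\in[\varphi_0,\varphi_0+2\pi]$ we therefore have $|r-r_0|\le 2\pi(|\tr\theta|+1)=:M$, provided $|r_0|>M+|\det\theta|$. Then
\[
\tau=\int_{\varphi_0}^{\varphi_0+2\pi}\frac{d\varphi}{|r(\varphi)|}\le \frac{2\pi}{|r_0|-M}\longrightarrow 0 .
\]

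The main obstacle is the blow-up of $\tau$ near $\gamma_1\cup\gamma_3$ (resp. $\gamma_2\cup\gamma_4$); note that the statement's ``$\gamma_1\cup\gamma_2$'' in the case $\det\theta<0$ should read $\gamma_1\cup\gamma_3$. The plan is as follows. Fix a small neighborhood $U$ of a saddle $p$ on the boundary curve, for instance $p_1$ or $p_3$, where the saddle is hyperbolic with eigenvalues of opposite sign. Periodic orbits approaching the boundary curve enter $U$ along entry points tending to the stable manifold. By continuous dependence on initial conditions, the flow cannot reach the exit section in bounded time from points arbitrarily close to $W^s(p)$; otherwise the limit point would leave $U$, contradicting the fact that it converges to $p$. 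Hence the passage time through $U$ tends to $+\infty$. For a quantitative version one could linearize (Hartman--Grobman, or the $C^1$ linearization available in the plane) and obtain passage time $\sim \mu^{-1}\log(1/d)$, where $d$ is the distance to $W^s(p)$. The qualitative argument suffices, however, and since $\tau$ is at least this passage time, $\tau\to\infty$. The delicate point is to make sure that approaching the curve from either side (bounded or unbounded annulus) always forces passage near a saddle. This follows because the boundary curves consist of saddles and separatrices, and orbits converge to them uniformly on compact time intervals.
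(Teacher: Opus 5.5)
Your proposal is correct and follows essentially the paper's route: transversal sections for continuity, the linear part with eigenvalues $\pm i\sqrt{-\det\theta}$ at the centers, long passage near the saddles for the blow-up, and the period written as $\int d\varphi/|r|$ at infinity, where your direct bound on $dr/d\varphi$ replaces the paper's compactification $R=1/r$, $\varphi'=1$; you also correctly read $\gamma_1\cup\gamma_3$ for the misprinted $\gamma_1\cup\gamma_2$. The only thing to tidy is that $d\alpha/dt=\omega+O(\rho)$ holds only after rescaling $\varphi\mp\pi/2$ by $\omega=\sqrt{-\det\theta}$ (in raw polar coordinates $|d\alpha/dt|=\omega^2\cos^2\alpha+\sin^2\alpha+O(\rho)$), although integrating over a full turn gives $2\pi/\omega$ either way.
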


\begin{proof}
The behavior of the function $\tau$ is analyzed considering transversal sections to the flow of \eqref{HamiltonianEquivalent}.

Assume that $\det \theta < 0$. The analysis for the case $\det \theta >0$ is analogous. To analyze the behavior of $\tau$ for $\lambda$ near $p_1$ and $p_2$, we introduce the following transversal sections
\[
\Sigma_i=\{\,p_i+(0, r)\;:\;r>0\,\}.
\]
The period of trajectories in a neighborhood of $p_1$ and $p_4$ can be studied using standard asymptotics analysis, which yields 
\[
\tau\bigl(p_i+(0, r)\bigr)
=\frac{2\pi}{\sqrt{-\det\theta}}\,r+\mathcal{O}(r^2).
\]
In particular, it follows that
\[
\tau(p_i)=\frac{2\pi}{\sqrt{-\det\theta}}, \qquad i=1,2.
\]

On the other hand, as a point $\lambda$ approaches the set $\gamma_1\cup \gamma_2$, the
corresponding trajectory spends an increasingly long time in a neighborhood of the saddle points $p_1$ and $p_3$ before completing one turn. Consequently, the period diverges, and we obtain
\[
\tau(\lambda)\to+\infty \hspace{.5cm} \text{ as } \hspace{.5cm} \lambda\to \gamma_1\cup \gamma_2.
\]

Finally, we study the asymptotic behavior of the period function $\tau$ as $|\lambda|\to\infty$ along the transversal section
\[
\Sigma=\{(0, r): r> r_0\}, \qquad \text{for $r_0$ sufficiently large}.
\]
To this end, we introduce the change of variables $y=1/R$. Under this transformation, system \eqref{HamiltonianEquivalent1} becomes
\begin{equation}\label{HamilInfinity1}
\begin{cases}
\dot{\varphi}=\dfrac{1}{R},\vspace{0.2cm}\\
\dot{R}=R^2\,\tfrac{\det\theta}{2}\sin(2\varphi)-\tr\theta \,R\,\cos\varphi.
\end{cases}
\end{equation}
Let $T(\varphi,R)$ denote the period function associated with \eqref{HamilInfinity1}. By construction, we have
\[
T(\varphi, R)=\tau(\varphi, 1/R).
\]

Next, performing a time rescaling, system \eqref{HamilInfinity1} can be rewritten as
\begin{equation}\label{HamilInfinity2}
\begin{cases}
\varphi'=1,\vspace{0.2cm}\\
R'=R^3\,\tfrac{\det\theta}{2}\sin(2\varphi)-\tr\theta \,R^2\,\cos\varphi,
\end{cases}
\end{equation}
which has the trivial period function $\widetilde T(\varphi, R)=2\pi$. Since systems \eqref{HamilInfinity1} and \eqref{HamilInfinity2} are related by a multiplicative factor $R$, the period function $T$ can be recovered from $\widetilde T$ as 
\[
T(0, R)=\int_0^{2\pi}\widetilde R(s, R)\,ds,
\]
where $s\mapsto (s,\widetilde R(s, R))$ denotes the solution of \eqref{HamilInfinity2} with initial condition $(0,R)$ (see, for instance, \cite[Proposition~1.14]{chicone}). Consequently, $T(0,0)=0$, which implies that $\tau(0, r)\to 0$ as $r\to\infty$.

\end{proof}

\begin{remark}
Since the solutions of (\ref{HamiltonianEquivalent}) starting at a point $\lambda\in C\setminus A_{\theta}$ are not periodic, we define their period to be infinite and write $\tau(\lambda)=+\infty$. By the previous proposition, the function $\tau:C\rightarrow (0, +\infty]$ defined in this way is continuous, and we will continue to refer to it as the period function.
\end{remark}



The next lemma states a simply property of the solutions of (\ref{HamiltonianEquivalent}) that will be very useful in our results ahead.

\begin{lemma}
\label{lemaT0}
    Let $\varphi\in \mathbb{S}^1$ be a solution of (\ref{HamiltonianEquivalent}) and assume that $2\varphi(T_0)=\pi\epsilon$ for some $T_0>0$, where $\epsilon\in\{-1, 1\}$. Then,  
    \begin{equation}
    \label{property}
        \forall t\in \R, \hspace{.5cm}\varphi(T_0-t)+\varphi(T_0+t)=\pi\epsilon.
    \end{equation}        
    \end{lemma}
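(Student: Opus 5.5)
We prove the identity (\ref{property}) by a reversibility (uniqueness) argument. The reflection $\varphi\mapsto \pi\epsilon-\varphi$ combined with time reversal $t\mapsto -t$ is a symmetry of the second-order equation (\ref{HamiltonianEquivalent}). This is property (B) used in the proof of Proposition \ref{heteroclinic}. We verify it directly and then apply uniqueness of solutions of the initial value problem at the time $T_0$.

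First we define the comparison curve
\[
\psi(t):=\pi\epsilon-\varphi(2T_0-t),\qquad t\in\R.
\]
Then $\dot\psi(t)=\dot\varphi(2T_0-t)$ and $\ddot\psi(t)=-\ddot\varphi(2T_0-t)$. For the right-hand side we use $\sin(2\pi\epsilon-2s)=-\sin(2s)$ and $\cos(\pi\epsilon-s)=-\cos s$, since $\epsilon=\pm1$. Writing $s=\varphi(2T_0-t)$, this gives
\[
-\tfrac{\det\theta}{2}\sin(2\psi)+\tr\theta\,\dot\psi\cos\psi
=\tfrac{\det\theta}{2}\sin(2s)-\tr\theta\,\dot\varphi(2T_0-t)\cos s
=-\ddot\varphi(2T_0-t)=\ddot\psi(t).
\]
So $\psi$ also solves (\ref{HamiltonianEquivalent}). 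The point to check is that the damping-type term $\tr\theta\,\dot\varphi\cos\varphi$ is compatible: it changes sign under time reversal, because $\dot\varphi$ is unchanged while $\cos$ flips sign. This is exactly why the symmetry axes are $\varphi=\pm\pi/2$ and not $0$ or $\pi$.

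Next we compare initial data at $t=T_0$. We have $\psi(T_0)=\pi\epsilon-\varphi(T_0)=\pi\epsilon-\pi\epsilon/2=\pi\epsilon/2=\varphi(T_0)$ and $\dot\psi(T_0)=\dot\varphi(T_0)$. The vector field (\ref{HamiltonianEquivalent1}) is smooth on the cylinder, so uniqueness gives $\psi\equiv\varphi$ on $\R$; solutions exist globally by boundedness property (A). Therefore $\varphi(t)+\varphi(2T_0-t)=\pi\epsilon$ for all $t$. Replacing $t$ by $T_0+t$ yields (\ref{property}).

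The main obstacle is the interpretation on $\mathbb{S}^1$. The hypothesis $2\varphi(T_0)=\pi\epsilon$ is naturally read modulo $2\pi$, so $\varphi(T_0)$ is $\pm\pi/2$ only up to multiples of $\pi$. To handle this we lift $\varphi$ to a real-valued solution. The equation is $\pi$-periodic in $\varphi$ apart from the $\cos\varphi$ term. If $\varphi(T_0)=\pi\epsilon/2+k\pi$, we run the same argument with the reflection $\varphi\mapsto \pi\epsilon+2k\pi-\varphi$. This reflection still sends $\sin 2\varphi\mapsto-\sin2\varphi$ and $\cos\varphi\mapsto-\cos\varphi$, and the resulting identity holds modulo $2\pi$. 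This is the sense in which (\ref{property}) is an equality in $\mathbb{S}^1$.
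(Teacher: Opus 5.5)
Your proof is correct and follows the paper's argument. Your $\psi(t)=\pi\epsilon-\varphi(2T_0-t)$ is the paper's $\beta(t-T_0)$, compared with $\varphi$ at $t=T_0$ instead of comparing $\alpha(t)=\varphi(T_0+t)$ with $\beta$ at $t=0$, and the trigonometric check and uniqueness step are the same. Your extra remark on reading the hypothesis modulo $2\pi$, via the reflection $\varphi\mapsto\pi\epsilon+2k\pi-\varphi$, is a valid clarification that the paper leaves implicit.
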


\begin{proof}  Define the functions 
$$\alpha(t)=\varphi(T_0+t)\hspace{.5cm}\mbox{ and }\hspace{.5cm}\beta(t)=\pi\epsilon-\varphi(T_0-t),$$
and note that, 
$$\alpha(0)=\varphi(T_0)=\epsilon\frac{\pi}{2}=\left(\pi-\frac{\pi}{2}\right)\epsilon=\beta(0)\hspace{.5cm}\mbox{ and }\hspace{.5cm}\dot{\alpha}(0)=\dot{\varphi}(T_0)=\dot{\beta}(0).$$
Furthermore, 
$$\Ddot{\beta}(t)=-\Ddot{\varphi}(T_0-t)=-\Bigl\{-\tfrac{\det\theta}{2}\sin2\varphi(T_0-t)+\tr\theta\dot{\varphi}(T_0-t)\cos\varphi(T_0-t)\Bigr\}$$
$$=\tfrac{\det\theta}{2}\sin(2(\pi\epsilon-\beta(t)))-\tr\theta\cdot\dot{\beta}(t)\cos(\pi\epsilon-\beta(t))=-\tfrac{\det\theta}{2}\sin(2\beta(t))+\tr\theta\dot{\beta}(t)\cos\beta(t),$$
showing that $\beta$ is a solution of (\ref{HamiltonianEquivalent}). Since $\alpha$ is also a solution of (\ref{HamiltonianEquivalent}), we get by uniqueness that $\alpha=\beta$, as stated.
\end{proof}

Next we show that the period of a solution of (\ref{HamiltonianEquivalent}) can be recovered by looking at the times of intersection of the solutions with specific lines of the cylinder.



\begin{proposition}
\label{periodic}
      Let $\varphi\in \mathbb{S}^1$ be a solution of (\ref{HamiltonianEquivalent}) determined by $\lambda\in C$ and define the set 
      $$A(\lambda)=\{T\geq 0; 2\varphi(T)=\pi\epsilon, \;\epsilon=-1, 1\}.$$
      Then, $\varphi$ has finite period if and only if  $\lambda$ is not an equilibrium of $(\ref{HamiltonianEquivalent})$ and the cardinality of $A(\lambda)$ is at least two. Moreover in this case, 
      $$\tau(\lambda)=2\inf\{T_2-T_1; 0\leq T_1<T_2 \mbox{ with }T_1, T_2\in A(\lambda)\},$$
      and there exists $T_1\geq 0$ such that 
      $$2T_1<\tau(\lambda)\hspace{.5cm}\mbox{ and }\hspace{.5cm} A(\lambda)=\left\{T_1+ k\frac{\tau(\lambda)}{2}, k\geq 0\right\}.$$
\end{proposition}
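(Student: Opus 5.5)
The whole argument rests on Lemma \ref{lemaT0}, which says that each time $T\in A(\lambda)$ is a reflection centre for $\varphi$. Composing two such reflections will produce a translation, and hence periodicity. The phase portrait of Proposition \ref{heteroclinic} will supply the converse direction and the structure of $A(\lambda)$.

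\emph{Step 1 (sufficiency and an upper bound).} Suppose $\lambda$ is not an equilibrium and $T_1<T_2$ lie in $A(\lambda)$, with $2\varphi(T_i)=\pi\epsilon_i$. Applying (\ref{property}) at $T_1$ and then at $T_2$ gives, for every $t$,
$$\varphi(t+2(T_2-T_1))=\pi\epsilon_2-\varphi(2T_1-t)=\pi(\epsilon_2-\epsilon_1)+\varphi(t).$$
Since $\pi(\epsilon_2-\epsilon_1)\in\{0,\pm 2\pi\}$, this holds on $\mathbb{S}^1$ with no shift. The same reflection identity, read at the level of the derivative, gives $\dot\varphi(t+2(T_2-T_1))=\dot\varphi(t)$. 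So $(\varphi,r)$ is periodic with a period dividing $2(T_2-T_1)$. Because $\lambda$ is not an equilibrium, the minimal period $\tau(\lambda)$ is positive and finite, and $\tau(\lambda)\le 2\inf\{T_2-T_1\}$.

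\emph{Step 2 (necessity).} Suppose $\tau(\lambda)<\infty$. By definition $\lambda$ is then not an equilibrium, and its orbit is one of the periodic orbits described in Proposition \ref{heteroclinic}.
\begin{itemize}
\item For orbits in the unbounded annuli, $\varphi$ winds once around $\mathbb{S}^1$, so it crosses both lines $\varphi=\pm\pi/2$ in every period.
\item For orbits in the bounded annuli around the centres $p_2,p_4$ (case $\det\theta<0$), the orbit encircles the centre on the line $\varphi=\pm\pi/2$, so it crosses that line twice per period.
\item For $\det\theta>0$, the orbits outside $K$ are shown in Proposition \ref{heteroclinic} to cross both lines.
\end{itemize}
Periodicity then makes $A(\lambda)$ infinite, in particular of cardinality at least two.

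\emph{Step 3 (the exact formula and the structure of $A(\lambda)$).} Let $T_1=\min A(\lambda)$; this exists because $A(\lambda)$ is closed, and its elements are isolated since $\dot\varphi\neq0$ at a crossing on a non-equilibrium orbit. Let $T_2$ be the next element of $A(\lambda)$. The key claim, and the main obstacle, is that $T_2-T_1=\tau(\lambda)/2$. I would first apply the reflection at $T_2$, which maps $T_1$ to $2T_2-T_1\in A(\lambda)$. Iterating the reflections shows that $A(\lambda)\supseteq\{T_1+k(T_2-T_1):k\ge 0\}$, and minimality of consecutive gaps gives equality.

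It remains to prove $\tau(\lambda)=2(T_2-T_1)$ rather than a proper divisor of it. Suppose the true period were $\tau<2(T_2-T_1)$. Then $T_1+\tau\in A(\lambda)$. Since the points of $A(\lambda)$ are equally spaced by $T_2-T_1$, we would need $\tau=T_2-T_1$. In that case the orbit would meet the lines $\varphi=\pm\pi/2$ exactly once per period, at a single phase point $q=(\varphi(T_1),r(T_1))$. But the reflection at $T_1$ sends $(\varphi,r)(T_1+s)$ to $(\pi\epsilon-\varphi,r)(T_1-s)$, so it maps the orbit to itself. One then checks that one crossing per period is impossible:
\begin{itemize}
\item a winding orbit crosses both lines, giving at least two crossings per period;
\item an orbit around a centre crosses its line twice, with opposite signs of $r$.
\end{itemize}
This contradiction shows $\tau(\lambda)=2(T_2-T_1)$. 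Combined with Step 1 it yields the infimum formula, since the consecutive gap is the smallest positive difference. Finally, $A(\lambda)=\{T_1+k\tau(\lambda)/2: k\ge 0\}$ by the previous paragraph. The inequality $2T_1<\tau(\lambda)$ follows from minimality of $T_1$: within any time interval of length $\tau/2$ there is a crossing, so $T_1<\tau/2$.
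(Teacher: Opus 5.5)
Your argument is correct, but after Step 1 (which is the paper's first step) it takes a longer route to the structure of $A(\lambda)$.

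The paper's route is purely algebraic. It applies Lemma \ref{lemaT0} at an arbitrary $T\in A(\lambda)$ with $t=\tau(\lambda)/2$. Using $\tau(\lambda)$-periodicity, $\varphi(T-\tau(\lambda)/2)=\varphi(T+\tau(\lambda)/2)$, so $2\varphi(T\pm\tau(\lambda)/2)=\pi\epsilon$. Hence $A(\lambda)$ is invariant under shifts by $\tau(\lambda)/2$. Combined with Step 1 (any two points of $A(\lambda)$ are at least $\tau(\lambda)/2$ apart), this gives $A(\lambda)=\{T_1+k\tau(\lambda)/2\}$, $2T_1<\tau(\lambda)$ and the infimum formula at once, without the phase portrait.

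Your route uses iterated reflections to show $A(\lambda)$ is an arithmetic progression with step $d=T_2-T_1$. You then reduce to $\tau\in\{d,2d\}$ and exclude $\tau=d$ by counting crossings per period via the orbit classification of Proposition \ref{heteroclinic}. This is valid, but it makes the key step depend on the global phase-portrait analysis, which the paper only sketches. The half-period trick closes it immediately: if $\tau=d$, then $T_1+d/2\in A(\lambda)$, contradicting that $T_2$ is the successor of $T_1$.

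Your Step 2 is where your approach adds something. The paper dismisses the necessity direction with \emph{certainly}, whereas you justify it, and some global input is genuinely needed there. Alternatively, Bendixson's criterion works on the two strips $|\varphi|<\pi/2$ and $|\varphi-\pi|<\pi/2$, where the divergence $\tr\theta\cos\varphi$ has constant nonzero sign. It shows that every nonconstant periodic orbit meets $\varphi=\pm\pi/2$.

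Two places to tighten:
\begin{itemize}
\item The inclusion $A(\lambda)\subseteq\{T_1+kd\}$ should be argued explicitly, not by \emph{minimality of consecutive gaps}. Reflection about $T_2$ maps the empty gap $(T_1,T_2)$ onto $(T_2,2T_2-T_1)$; then induct.
\item $2T_1<\tau$ is cleanest via $T_1-\tau/2=(T_1+\tau/2)-\tau$. If $T_1\ge\tau/2$, this would be an element of $A(\lambda)$ smaller than $T_1$.
\end{itemize}
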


\begin{proof}
   If the solution $\varphi$ has finite period, then the cardinality of $A(\lambda)$ is certainly greater or equal two. Reciprocally, if such cardinality is greater or equal two, let $T_1, T_2\in A(\lambda)$ be arbitrary times satisfying $0\leq T_1<T_2$. Using then relation (\ref{property}) for these times, allows us to get
    $$\forall t\in\R,\hspace{.5cm} \varphi(t+2(T_2-T_1))=\varphi(T_2+(t+T_2-2T_1))=\epsilon
    _2\pi-\varphi(-t+2T_1)$$
    $$=\epsilon
    _2\pi-\varphi(T_1+(-t+T_1))=\epsilon
    _2\pi-\left[\epsilon
    _1\pi-\varphi(t)\right]=\varphi(t),$$
    since $\varepsilon_i\in\{-1, 1\}$ for $i=1, 2$ and the sum is modulo $2\pi$. Therefore $\tau(\lambda)\leq 2(T_2-T_1)$, showing that $\varphi$ has finite period. On the other hand, since any $T\in A(\lambda)$ satisfies property (\ref{property}), it holds that
$$\pi\epsilon-\varphi\left(T\pm\frac{\tau(\lambda)}{2}\right)=\varphi\left(T\mp\frac{\tau(\lambda)}{2}\right)=\varphi\left(T\mp\frac{\tau(\lambda)}{2}\pm \tau(\lambda)\right)=\varphi\left(T\pm\frac{\tau(\lambda)}{2}\right)\hspace{.5cm}\implies \hspace{.5cm}2\varphi\left(T\pm\frac{\tau(\lambda)}{2}\right)=\pi\epsilon.$$
Consequently, if $T_1=\min A(\lambda)$, we obtain
$$T_1<\frac{\tau(\lambda)}{2}\hspace{.5cm}\mbox{ and }\hspace{.5cm} A(\lambda)=\left\{T_1+ n\frac{\tau(\lambda)}{2}, n\geq 0\right\},$$
and hence
$$\tau(\lambda)=2\inf\{T_2-T_1; 0\leq T_1<T_2 \mbox{ with }T_1, T_2\in A(\lambda)\},$$
concluding the proof.
\end{proof}

\subsection{Symmetries of the vertical part of the Hamiltonian system}

Our goal in this section is to show that, even in the presence of the perturbation term, the solutions of (\ref{horizontaldouble}) admit nontrivial groups of discrete symmetries. These symmetries induce corresponding symmetries on the group $G(\theta)$, which commute with the exponential map. 

\subsubsection{Symmetries of the phase space of the perturbed pendulum}

In the regular case, the phase portrait of the perturbed pendulum admits the following symmetries:
\[
\varepsilon_1:(\varphi, r)\mapsto (-\varphi, -r), \qquad 
\varepsilon_2:(\varphi, r)\mapsto (\varphi+\pi, -r), \qquad
\varepsilon_3:(\varphi, r)\mapsto (-\varphi+\pi, r).
\]

These transformations generate the discrete Klein four-group 
\[
\mathcal{K}=\{1, \varepsilon_1,\varepsilon_2,\varepsilon_3\}.
\] 
Among them, the map $\varepsilon_1$ preserves the orientation of time, while $\varepsilon_2$ and $\varepsilon_3$ reverse the direction of time in the solutions of the perturbed pendulum.

\subsubsection{Symmetries of the solutions of the pendulum}

We now show that the symmetries of the phase portrait of the perturbed pendulum induce corresponding symmetries of the solutions of (\ref{verticaldouble}).

\begin{proposition}
\label{verticalpart}
    The maps 
    \[
    \varepsilon_i: \{(\varphi(t), r(t)), \, t\in[0,T]\}\mapsto \{(\varphi_i(t), r_i(t)), \, t\in[0,T]\}, \qquad i=1,2,3,
    \]
    defined by
    \[
    (\varphi_1(t), r_1(t))=(-\varphi(t), -r(t)), \qquad
    (\varphi_2(t), r_2(t))=(\varphi(T-t)+\pi, -r(T-t)),
    \]
    \[
    (\varphi_3(t), r_3(t))=(-\varphi(T-t)+\pi, r(T-t)),
    \]
    transform trajectories of the perturbed pendulum (\ref{HamiltonianEquivalent}) into solutions of the same system.
\end{proposition}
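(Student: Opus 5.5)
Each of the three claims is a direct substitution into the second-order form (\ref{HamiltonianEquivalent}), in the same spirit as the proof of Lemma \ref{lemaT0}. Writing $F(\varphi,\dot\varphi):=-\tfrac{\det\theta}{2}\sin(2\varphi)+\tr\theta\,\dot\varphi\cos\varphi$, we need, for each $i$, that $\varphi_i$ satisfies $\ddot\varphi_i=F(\varphi_i,\dot\varphi_i)$ and that $r_i=\dot\varphi_i$. Once this holds, $(\varphi_i,r_i)$ is a solution of the first-order system (\ref{HamiltonianEquivalent1}), hence of (\ref{verticaldouble}). Because the system is autonomous, the time shift $t\mapsto T-t$ is harmless, and all arguments are taken modulo $2\pi$ on $\mathbb{S}^1$.

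\emph{Case $\varepsilon_1$.} Here $\varphi_1=-\varphi$, so $\dot\varphi_1=-\dot\varphi=-r=r_1$. Then $\ddot\varphi_1=-F(\varphi,\dot\varphi)$. Since $\sin(2\cdot)$ is odd, $\cos$ is even, and $\dot\varphi$ changes sign, we get $F(-\varphi,-\dot\varphi)=-F(\varphi,\dot\varphi)$. Hence $\ddot\varphi_1=F(\varphi_1,\dot\varphi_1)$.

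\emph{Case $\varepsilon_2$.} Here $\varphi_2(t)=\varphi(T-t)+\pi$, so $\dot\varphi_2(t)=-\dot\varphi(T-t)=r_2(t)$ and $\ddot\varphi_2(t)=\ddot\varphi(T-t)$. We compute $F(\varphi_2,\dot\varphi_2)$. Using $\sin(2\varphi+2\pi)=\sin 2\varphi$ and $\cos(\varphi+\pi)=-\cos\varphi$, the product $\dot\varphi_2\cos\varphi_2$ equals $(-\dot\varphi)(-\cos\varphi)=\dot\varphi\cos\varphi$, evaluated at $T-t$. Therefore $F(\varphi_2,\dot\varphi_2)=F(\varphi,\dot\varphi)(T-t)=\ddot\varphi(T-t)$, as required.

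\emph{Case $\varepsilon_3$.} Here $\varphi_3(t)=\pi-\varphi(T-t)$, so $\dot\varphi_3(t)=\dot\varphi(T-t)=r_3(t)$ and $\ddot\varphi_3(t)=-\ddot\varphi(T-t)$. Using $\sin(2\pi-2\varphi)=-\sin 2\varphi$ and $\cos(\pi-\varphi)=-\cos\varphi$, we get $F(\varphi_3,\dot\varphi_3)=-F(\varphi,\dot\varphi)(T-t)=\ddot\varphi_3(t)$. This is exactly the computation already done for $\beta$ in Lemma \ref{lemaT0} with $\epsilon=1$. Alternatively, $\varepsilon_3=\varepsilon_1\circ\varepsilon_2$ up to the shift $\varphi\mapsto\varphi+2\pi$, so this case also follows from the first two.

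There is no real obstacle here. The only point needing care is that the sign of the perturbation term $\tr\theta\,\dot\varphi\cos\varphi$ comes out right in each case: the time-reversing maps must flip the sign of $\cos\varphi$ so that it compensates for the sign change of $\dot\varphi$ (as in $\varepsilon_2$), or so that it matches the sign of $\ddot\varphi$ (as in $\varepsilon_3$). I will check this explicitly in the write-up. I will also remark that the transformed curves are defined on the same interval $[0,T]$.
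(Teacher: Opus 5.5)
Your proposal is correct and follows essentially the paper's route: a direct substitution in which the sign change of $\cos\varphi$ under $\varphi\mapsto\varphi+\pi$ or $\varphi\mapsto\pi-\varphi$ compensates for the time reversal. The paper checks $\dot\varphi_3=r_3$ and $\dot r_3$ in the first-order system and writes out only $\varepsilon_3$, stating the identity as $\sin(2(\pi-\alpha))=\sin(2\alpha)$; that is a sign slip, although its final displayed expression is right. You instead work with the equivalent second-order form, treat all three cases explicitly, and use the correct identity $\sin(2\pi-2\varphi)=-\sin 2\varphi$.
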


\begin{proof}
Since the three cases are analogous, we show the claim only for $\varepsilon_3$. For this case, 
\[
\frac{d}{dt}\varphi_3(t)=\frac{d}{dt}\bigl(-\varphi(T-t)+\pi\bigr)=\dot{\varphi}(T-t)=r(T-t)=r_3(t),
\]
and
\[
\frac{d}{dt}r_3(t)=\frac{d}{dt}r(T-t)=-\dot{r}(T-t)=-\Bigl(-\tfrac{\det\theta}{2}\sin 2\varphi(T-t)+\tr\theta\, r(T-t)\cos\varphi(T-t)\Bigr).
\]
Using the identities $\sin(2(\pi-\alpha))=\sin(2\alpha)$ and $\cos(\pi-\alpha)=-\cos(\alpha)$, this becomes
$$
-\tfrac{\det\theta}{2}\sin\bigl(2(\pi-\varphi(T-t))\bigr)+\tr\theta\, r(T-t)\cos\bigl(\pi-\varphi(T-t)\bigr)
=-\tfrac{\det\theta}{2}\sin 2\varphi_3(t)+\tr\theta\, r_3(t)\cos\varphi_3(t),
$$
showing that $(\varphi_3(t), r_3(t))$ is indeed a solution of (\ref{HamiltonianEquivalent}), as claimed.
\end{proof}

\subsubsection{Symmetries of extremal trajectories}

The action of $\mathcal{K}$ on the solutions of the vertical part of the Hamiltonian can be extended to the solutions of the maximized Hamiltonian as follows:
$$\varepsilon_i:\{\lambda(t),\, t\in [0, T]\}\mapsto\{\lambda_i(t),\, t\in [0, T]\}.$$
The next result describes the relation between the normal extremal trajectory $\lambda$ and its image under the action of $\mathcal{K}$.

\begin{proposition}
\label{horizontalpart}
    Let $g(t)=(z(t), w(t))$, $t\in[0, T]$, be a normal extremal trajectory, and let $g_i(t)=(z_i(t), w_i(t))$, $t\in [0, T]$, be the trajectory obtained by the action of $\varepsilon_i$ defined above. Then:
    $$z_1(t)=z(t)\hspace{.5cm}\mbox{ and }\hspace{.5cm} w_1(t)=-w(t),$$
    $$z_2(t)=z(T-t)-z(T)\hspace{.5cm}\mbox{ and }\hspace{.5cm} w_2(t)=\rho_{-z(T)}\bigl(w(T-t)-w(T)\bigr),$$
    $$z_3(t)=z(T-t)-z(T)\hspace{.5cm}\mbox{ and }\hspace{.5cm} w_3(t)=\rho_{-z(T)}\bigl(w(T)-w(T-t)\bigr).$$
\end{proposition}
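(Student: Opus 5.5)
The plan is to integrate the horizontal system (\ref{horizontaldouble}) directly, with initial condition $(z_i(0),w_i(0))=(0,0)$. Each time we substitute the transformed vertical solution $(\varphi_i,r_i)$ from Proposition \ref{verticalpart}. The horizontal part is driven only by $\varphi$, through $\cos\varphi$ and $\sin\varphi$. So we only need the trigonometric identities
\[
\cos(-\varphi)=\cos\varphi,\quad \sin(-\varphi)=-\sin\varphi,\quad \cos(\varphi+\pi)=-\cos\varphi,\quad \sin(\varphi+\pi)=-\sin\varphi,
\]
\[
\cos(\pi-\varphi)=-\cos\varphi,\quad \sin(\pi-\varphi)=\sin\varphi,
\]
followed by a change of variables $s=T-t$ in the integrals. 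Uniqueness of solutions of the horizontal Cauchy problem then identifies $g_i$ with the stated formulas.

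For $\varepsilon_1$ we have $\dot z_1=\cos\varphi_1=\cos\varphi=\dot z$, so $z_1=z$. Then
\[
\dot w_1=\sin\varphi_1\,\rho_{z_1}\eta=-\sin\varphi\,\rho_{z}\eta=-\dot w,
\]
which gives $w_1=-w$.

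For $\varepsilon_2$ we have $\dot z_2(t)=\cos(\varphi(T-t)+\pi)=-\cos\varphi(T-t)$. Hence
\[
z_2(t)=-\int_0^t\cos\varphi(T-s)\,ds=-\int_{T-t}^{T}\cos\varphi(u)\,du=z(T-t)-z(T).
\]
Next, $\dot w_2(t)=-\sin\varphi(T-t)\,\rho_{z(T-t)-z(T)}\eta$. Since $\rho$ is a one-parameter group, $\rho_{z(T-t)-z(T)}=\rho_{-z(T)}\rho_{z(T-t)}$. Pulling out the constant matrix $\rho_{-z(T)}$ gives
\[
w_2(t)=-\rho_{-z(T)}\int_{T-t}^{T}\sin\varphi(u)\,\rho_{z(u)}\eta\,du=\rho_{-z(T)}\bigl(w(T-t)-w(T)\bigr).
\]

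For $\varepsilon_3$ the computation is identical in $z$, because $\cos(\pi-\varphi)=-\cos\varphi$, so again $z_3(t)=z(T-t)-z(T)$. In $w$ the sign flips, since $\sin(\pi-\varphi)=+\sin\varphi$. This yields $w_3(t)=\rho_{-z(T)}\bigl(w(T)-w(T-t)\bigr)$.

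The only point needing care is justifying that $\rho_{-z(T)}$ can be pulled out of the integral. This follows from the group property $\rho_{a+b}=\rho_a\rho_b$ and the fact that $\rho_{-z(T)}$ does not depend on the integration variable. One should also check that the transformed vertical curves from Proposition \ref{verticalpart} are valid inputs to (\ref{horizontaldouble}). This holds because the horizontal system is just a quadrature once $\varphi$ is known. Everything else is routine.
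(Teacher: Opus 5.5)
Your proof is correct and takes essentially the same route as the paper. Both integrate the horizontal system (\ref{horizontaldouble}) along the transformed vertical solution, apply the trigonometric identities, substitute $u=T-s$, and pull out $\rho_{-z(T)}$ using $\rho_{a+b}=\rho_a\rho_b$. The paper writes this out only for $\varepsilon_3$ and calls the other cases analogous, whereas you carry out all three explicitly.
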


\begin{proof}
    As in the previous proposition, we consider only the case $\varepsilon_3$. We have
    $$z_3(t)=\int_0^t\cos\varphi_3(s)\,ds=\int_0^t\cos(-\varphi(T-s)+\pi)\,ds=-\int_0^t\cos\varphi(T-s)\,ds,$$
    which can be rewritten as
    $$-\int_{T-t}^T\cos\varphi(\tau)\,d\tau=-\int_0^T\cos\varphi(\tau)\,d\tau+\int_0^{T-t}\cos\varphi(\tau)\,d\tau=z(T-t)-z(T).$$
    Similarly,
    $$w_3(t)=\int_0^t\sin\varphi_3(s)\,\rho_{z_3(s)}\eta\,ds=\int_0^t\sin(-\varphi(T-s)+\pi)\,\rho_{z(T-s)-z(T)}\eta\,ds,$$
    $$=\rho_{-z(T)}\int_0^t\sin\varphi(T-s)\,\rho_{z(T-s)}\eta\,ds=\rho_{-z(T)}\int_{T-t}^T\sin\varphi(\tau)\,\rho_{z(\tau)}\eta\,d\tau.$$
    Hence,
    $$w_3(t)=\rho_{-z(T)}\left(\int_0^T\sin\varphi(\tau)\,\rho_{z(\tau)}\eta\,d\tau-\int_0^{T-t}\sin\varphi(\tau)\,\rho_{z(\tau)}\eta\,d\tau\right)=\rho_{-z(T)}\bigl(w(T)-w(T-t)\bigr),$$
    which proves the claim.
\end{proof}

\subsubsection{Symmetries of the endpoints of extremal trajectories}

Following the idea in \cite{MoSach, Sach1, Sach2}, we define the end-point action of $\varepsilon_i$ on $G(\theta)$ as 
$$\varepsilon_i:G(\theta)\rightarrow G(\theta), \hspace{.5cm}g(T)\mapsto g_i(T).$$
By Proposition \ref{horizontalpart}, the point $g_i(T)$ depends only on $g(T)$ and not on the whole trajectory $\{g(t), t\in [0, T]\}$. Since $G(\theta)$ is complete, this shows that $\varepsilon_i$ is a well-defined action on the group $G(\theta)$. The next result gives an explicit expression for these actions.  

\begin{proposition}
\label{endpointsreflection}
    Let $g=(z, w)\in G(\theta)$ and let $g_i=\varepsilon_i(g)$. Then:
    $$g_1=(z, -w), \hspace{.5cm} g_2=(-z, -\rho_{-z}w),\hspace{.5cm}\mbox{ and }\hspace{.5cm}
    g_3=(-z, \rho_{-z}w).$$
\end{proposition}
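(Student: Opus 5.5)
The plan is to evaluate the formulas of Proposition \ref{horizontalpart} at the final time $t=T$ and use the initial conditions $z(0)=0$, $w(0)=0$ of every normal extremal trajectory. Fix $g=(z,w)\in G(\theta)$ and choose a normal extremal trajectory $g(t)=(z(t),w(t))$, $t\in[0,T]$, with $g(T)=g$. Such a trajectory exists because the system is completely controllable and, by Filippov's theorem, optimal (hence normal extremal) trajectories exist. As observed before the statement, $g_i(T)$ depends only on $g(T)$, so the action $\varepsilon_i(g)=g_i(T)$ is well defined and any such trajectory can be used.

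For $\varepsilon_1$, Proposition \ref{horizontalpart} gives directly $z_1(T)=z(T)=z$ and $w_1(T)=-w(T)=-w$, so $g_1=(z,-w)$.

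For $\varepsilon_2$ and $\varepsilon_3$, substitute $t=T$ in the corresponding formulas, so that $T-t=0$. Then $z_2(T)=z_3(T)=z(0)-z(T)=-z$. Likewise
$$w_2(T)=\rho_{-z(T)}\bigl(w(0)-w(T)\bigr)=-\rho_{-z}w,\qquad w_3(T)=\rho_{-z(T)}\bigl(w(T)-w(0)\bigr)=\rho_{-z}w.$$
This gives $g_2=(-z,-\rho_{-z}w)$ and $g_3=(-z,\rho_{-z}w)$, which is the claim.

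The only point needing care is well-definedness, namely that every $g$ is reached by some normal extremal. The paper already addresses this with the remark on completeness together with the existence of optimal controls, and I will cite that. The computation itself is immediate.
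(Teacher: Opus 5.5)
Your proof is correct and follows the paper's argument: evaluate the formulas of Proposition~\ref{horizontalpart} at $t=T$ and use $z(0)=0$, $w(0)=0$. One caveat about your well-definedness aside is that, by Proposition~\ref{zerosw} together with $z(t)=\pm t$ when $w\equiv 0$, the origin is not $\mathrm{Exp}(\lambda,T)$ for any $T>0$, so there the formulas simply return the fixed point $(0,0)$ rather than coming from a nonconstant normal extremal; this does not affect the result.
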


\begin{proof}
    The result follows directly by evaluating at $t=T$ in the formulas obtained in Proposition \ref{horizontalpart}.
\end{proof}

\subsubsection{Symmetries of the Exponential Map}

Finally, we define the action of $\mathcal{K}$ on the preimage of the exponential map as
$$\varepsilon_i:C\times\R_+\rightarrow C\times\R_+, \hspace{.5cm}(\lambda, T)\mapsto (\lambda_i, T),$$
where $\lambda$ and $\lambda_i$ are, respectively, the initial points of the trajectories $(\varphi(t), r(t))$ and $(\varphi_i(t), r_i(t))$ of the perturbed pendulum (\ref{verticaldouble}). Their explicit expressions are given in the following result.

\begin{proposition}
\label{igualpendulo}
Let $(\lambda, T)\in C\times\R_+$ and $(\lambda_i, T)\in C\times\R_+$. If $\lambda=(\varphi, r)\in \mathbb{S}^1\times\R$, then
$$\lambda_1=-\lambda, \hspace{.5cm}\lambda_2=(\varphi(T)+\pi, r(T)),\hspace{.5cm}\text{and}\hspace{.5cm}\lambda_3=(-\varphi(T)+\pi, r(T)).$$
\end{proposition}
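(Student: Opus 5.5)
The plan is to identify each $\lambda_i$ directly with the initial value of the transformed pendulum trajectory and read it off from Proposition~\ref{verticalpart}.

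By the definition of the action of $\mathcal{K}$ on $C\times\R_+$, $\lambda_i$ is the initial point $(\varphi_i(0),r_i(0))$ of the curve $(\varphi_i(t),r_i(t))$, $t\in[0,T]$. Proposition~\ref{verticalpart} says this curve is again a solution of (\ref{verticaldouble}). Because solutions are uniquely determined by their initial data, the curve is exactly the vertical part of the extremal issued from $\lambda_i$. This is also what makes the action well defined, and it is consistent with Proposition~\ref{horizontalpart}.

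\textbf{The three cases.} I then evaluate the formulas of Proposition~\ref{verticalpart} at $t=0$, using $(\varphi(0),r(0))=\lambda$.
\begin{itemize}
\item[] For $\varepsilon_1$ this gives $(-\varphi(0),-r(0))=-\lambda$ immediately.
\item[] For $\varepsilon_3$ it gives $(-\varphi(T)+\pi,\,r(T))$, which is the stated formula.
\item[] For $\varepsilon_2$, evaluating at $t=0$ gives the angular component $\varphi(T)+\pi$, as stated. The radial component is the value at $t=0$ of the second coordinate of $\varepsilon_2$ applied to the reversed curve, namely $\pm r(T)$ with the sign fixed by $\varepsilon_2$.
\end{itemize}

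\textbf{Main obstacle.} The only delicate point is this radial component of $\lambda_2$. As written, Proposition~\ref{verticalpart} prescribes $r_2(t)=-r(T-t)$. I have checked that this is the version solving (\ref{verticaldouble}): it satisfies $\dot\varphi_2=r_2$, and the $\dot r$ equation holds because $\sin 2(\varphi+\pi)=\sin2\varphi$ and $(-r)\cos(\varphi+\pi)=r\cos\varphi$. Evaluating at $t=0$ therefore yields $-r(T)$, not the stated $r(T)$.

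So I cannot obtain the stated second component of $\lambda_2$ from the definitions as given. The claim $\lambda_2=(\varphi(T)+\pi,\,r(T))$ is provable only if the phase-space map $\varepsilon_2$ is changed, which the definitions do not allow. I would flag this sign as needing correction in the statement.

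The $\varphi$-component of $\lambda_2$ and the full formulas for $\lambda_1$ and $\lambda_3$ follow as described.
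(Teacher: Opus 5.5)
Your proposal is correct and follows the paper's own argument, whose entire proof is ``apply the formulas in Proposition~\ref{verticalpart} at $t=0$.'' Your sign flag is also right: since $r_2(t)=-r(T-t)$ (forced by $\dot\varphi_2=r_2$ with $\varphi_2(t)=\varphi(T-t)+\pi$), that evaluation gives $\lambda_2=(\varphi(T)+\pi,\,-r(T))$, so the printed $r(T)$ in the statement is a typo, and a harmless one, because the paper's subsequent Maxwell analysis reduces to $\mathrm{MAX}_3$ and never uses $\lambda_2$.
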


\begin{proof}
Apply the formulas in Proposition \ref{verticalpart} at $t=0$.
\end{proof}

\bigskip

The results of the previous sections imply that the actions on the preimage of the exponential map commute with the endpoint action, i.e.,
$$\mathrm{Exp}\circ\varepsilon_i=\varepsilon_i\circ\mathrm{Exp}.$$
Indeed, for any $\lambda\in C$ and $T\in\R_+$, we have
$$\mathrm{Exp}\bigl(\varepsilon_i(\lambda, T)\bigr)=\mathrm{Exp}(\lambda_i, T)=\pi(\lambda_i(T))=g_i(T)=\varepsilon_i(g(T))=\varepsilon_i(\pi(\lambda(T)))=\varepsilon_i\bigl(\mathrm{Exp}(\lambda, T)\bigr).$$

\section{Maxwell points and optimality of normal extremal trajectories}

In this section, we introduce the notions of Maxwell points and Maxwell times, which play a fundamental role in the study of extremal trajectories. In the analytic setting, a normal trajectory ceases to be optimal after reaching a Maxwell point. Therefore, the corresponding Maxwell time provides an upper bound for the cut time, marking the loss of optimality and offering key insight into the trajectory's behavior.

\begin{definition}[Maxwell Point] 
A point $g(T)$, $T>0$, on a sub-Riemannian geodesic is called a \emph{Maxwell point} if there exists another extremal trajectory $\widetilde{g}$ with the same initial condition, such that $g(t)\neq \widetilde{g}(t)$ for all $t\in (0, T)$ and $g(T)=\widetilde{g}(T)$.
\end{definition}

For the symmetries in the group $\mathcal{K}$ associated with the solutions of (\ref{HamiltonianEquivalent}), we define the {\bf Maxwell stratum} $\mathrm{MAX}_i$ corresponding to the symmetry $\varepsilon_i$, $i=1,2,3$, as
$$
\mathrm{MAX}_i := \{ (\lambda, T) \in C \times \R_+ \;|\; \lambda \neq \lambda_i \text{ and } \mathrm{Exp}(\lambda, T) = \mathrm{Exp}(\lambda_i, T) \},
$$
and the {\bf Maxwell set} of $\mathcal{K}$ as $\mathrm{MAX} = \mathrm{MAX}_1 \cup \mathrm{MAX}_2 \cup \mathrm{MAX}_3$. The corresponding Maxwell strata and set at the group level are then
$$
\mathrm{Max}_i := \pi\bigl(\mathrm{MAX}_i\bigr), \quad i=1,2,3, \qquad \mathrm{Max} = \bigcup_{i=1}^3 \mathrm{Max}_i,
$$
respectively. If $(\lambda, T) \in \mathrm{MAX}_i$, then $g(T) = \mathrm{Exp}(\lambda, T) \in \mathrm{Max}_i$ is a Maxwell point on the geodesic $g(t) = \mathrm{Exp}(\lambda, t)$, since 
$$
\lambda \neq \lambda_i \quad \implies \quad \mathrm{Exp}(\lambda, t) \not\equiv \mathrm{Exp}(\lambda_i, t), \quad i=1,2,3.
$$

Since the elements of the $\mathcal{K}$-action on $G(\theta)$ commute with the exponential map, Proposition \ref{endpointsreflection} implies
\begin{equation}
\label{MaxwellinG}
g = \varepsilon_i(g) \iff
\begin{cases}
g \in \R \times \{0\}, & i=1,\\
g = (0,0), & i=2,\\
g \in \{0\} \times \R^2, & i=3.
\end{cases}
\end{equation}
Hence, the Maxwell points satisfy
$$
\mathrm{Max} \subset (\R \times \{0\}) \cup (\{0\} \times \R^2).
$$

\subsection{The zeros of $z(t)$ and $w(t)$}

By relation (\ref{MaxwellinG}), a necessary condition for a point $g=(z,w)$ to belong to a Maxwell stratum is that either $z=0$ or $w=0$. In this section, we analyze when a normal extremal trajectory starting at the origin satisfies this condition.

\subsubsection{The zeros of $w(t)$}

Since $T^*G(\theta)$ is trivializable, we can assume that any element in $T^*G(\theta)$ is written as 
$$\lambda=((\mu, p), (z, w)), \hspace{.5cm}\mbox{ where }\mu\in\R, p\in(\R^2)^*, \mbox{ and }g=(z, w)\in G(\theta). $$

In particular, we get that
$$h_1(\lambda)=\mu,\hspace{.5cm}h_2(\lambda)=\langle p, \rho_z\eta\rangle, \hspace{.5cm}\mbox{ and }\hspace{.5cm} H(\lambda)=\frac{1}{2}(\mu^2+\langle p, \rho_z\eta\rangle^2).$$
In these coordinates, the Hamiltonian equations are written as
$$\displaystyle\left\{\begin{array}{l}
\dot{z}=\frac{\partial H}{\partial \mu}=\mu\\
   \\
\dot{w}=\frac{\partial H}{\partial p}=\langle p, \rho_z\eta\rangle \rho_z\eta
\end{array}\right.\hspace{.5cm}\mbox{ and }\hspace{.5cm}\left\{\begin{array}{l}
\dot{\mu}=-\frac{\partial H}{\partial z}=-\langle p, \rho_z\eta\rangle \langle p, \rho_z\theta\eta\rangle 
\\

\\
\dot{p}=-\frac{\partial H}{\partial w}=0
\end{array}\right..$$

The next result shows that the planar component of normal extremal trajetories is constant equals zero or have no zeros for positive times.

\begin{proposition}
\label{zerosw}
    Let $\lambda\in C$ and consider $g(t)=(z(t), w(t))$ the associated normal extremal trajectory. Then,  $w(t)\neq 0$ for all $t>0$ or $w\equiv 0$.
\end{proposition}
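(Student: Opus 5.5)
The plan is to avoid any explicit integration of the horizontal system and instead use the conserved covector $p$ from the trivialized Hamiltonian equations written just before the statement. Since $\dot p = -\partial H/\partial w = 0$, the $(\R^2)^*$-component $p$ of $\lambda(t)$ is constant along the extremal. Along the normal extremal we also have the identification $h_2(\lambda(t)) = \langle p, \rho_{z(t)}\eta\rangle = \sin\varphi(t)$, which comes from the polar coordinates introduced before Proposition \ref{equations}.

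The key step is to pair $w(t)$ with this constant covector $p$. From the horizontal equation $\dot w = \langle p,\rho_z\eta\rangle\rho_z\eta$ and $w(0)=0$ we get
\[
\langle p, w(t)\rangle=\int_0^t \langle p,\rho_{z(s)}\eta\rangle^2\,ds=\int_0^t \sin^2\varphi(s)\,ds ,
\]
where the first equality uses that $p$ does not depend on $s$. The right-hand side is nonnegative and nondecreasing in $t$. Hence, if $w(T)=0$ for some $T>0$, then $\int_0^T\sin^2\varphi(s)\,ds=0$. By continuity this gives $\sin\varphi\equiv 0$ on $[0,T]$.

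The remaining step is to upgrade $\sin\varphi\equiv0$ on $[0,T]$ to $w\equiv0$ for all times. If $\sin\varphi\equiv 0$ on $[0,T]$, then $\varphi$ is constant there, equal to $0$ or $\pi$, and $r=\dot\varphi\equiv 0$. Thus $(\varphi,r)$ sits at one of the points $(0,0)$ or $(\pi,0)$. These are equilibria of (\ref{verticaldouble}) (the points $(k\pi/2,0)$ with $k$ even). By uniqueness of solutions, the vertical trajectory is that equilibrium for all $t\in\R$, so $\sin\varphi\equiv 0$ everywhere. Then $\dot w=\sin\varphi\,\rho_z\eta\equiv 0$ by (\ref{horizontaldouble}), and with $w(0)=0$ this gives $w\equiv 0$. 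In this case $z(t)=\pm t$.

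Contrapositively, whenever $\lambda$ is not one of the two equilibria $(0,0),(\pi,0)$, the function $t\mapsto\langle p,w(t)\rangle$ is strictly positive for $t>0$. In particular $w(t)\neq 0$ for all $t>0$, which is the dichotomy claimed. I expect no serious obstacle. The only points to check carefully are the identification $h_2=\langle p,\rho_z\eta\rangle=\sin\varphi$ in the trivialization, and the use of constancy of $p$, which is what lets $p$ pass through the integral.
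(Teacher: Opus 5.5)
Your proof is correct and is essentially the paper's argument. You pair $w$ with the conserved covector $p$ and use $\langle p,\dot w\rangle=\langle p,\rho_z\eta\rangle^2=\sin^2\varphi\ge 0$, so a zero of $w$ at some $T>0$ forces $\sin\varphi\equiv 0$ on $[0,T]$, hence the equilibrium $(0,0)$ or $(\pi,0)$ and $w\equiv 0$; you only spell out the last step ($\varphi$ constant, $r=\dot\varphi=0$, uniqueness of solutions) more explicitly than the paper does.
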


\begin{proof}
By the previous, if $\lambda=(\mu, p)\in C$, then the second component of the vertical part of the Hamiltonian equation implies that $p(t)=p$ is constant. Hence, 
     $$\sin\varphi(t)=\langle\lambda(t), (0, \rho_z\eta)\rangle=\langle p, \rho_{z(t)}\eta\rangle\hspace{.5cm}\implies \hspace{.5cm}\langle p, \dot{w}\rangle=\langle p, \rho_{z(t)}\eta\rangle^2=\sin^2\varphi(t)\geq 0,$$
     showing that the function $t\mapsto \langle p_0, w(t)\rangle$ is nondecreasing. Since, $w(0)=0$, the existence of $T>0$ with $w(T)=0$ imply that 
     $$0=\langle w(0), p\rangle \leq \langle w(t), p\rangle\leq \langle  w(T), p\rangle=0, \hspace{.5cm}\forall
    t\in [0,T],$$ 
    proving $\varphi(t)\equiv k\pi$ for all $t\in [0, T]$. Therefore, one must $\lambda$ is a singularity and $w\equiv 0$, as stated.
\end{proof}

\subsubsection{The zeros of $z(t)$} 

The next result relates property (\ref{property}) of the solutions of the vertical part with the zeros of the $z$-component of a normal extremal trajectory.

\begin{lemma}
\label{firstzero}
Let $\varphi\in \mathbb{S}^1$ be a solution of (\ref{HamiltonianEquivalent}) determined by $\lambda\in C$. If $T_0\in A(\lambda)$, then $z(2T_0)=0$. 
\end{lemma}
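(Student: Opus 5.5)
The plan is to write $z(2T_0)$ as an integral and use the reflection identity \eqref{property} of Lemma \ref{lemaT0} to show that the integrand is odd about $t=T_0$. Since $z(0)=0$ and $\dot z=\cos\varphi$ by \eqref{horizontaldouble},
$$z(2T_0)=\int_0^{2T_0}\cos\varphi(s)\,ds .$$
Because $T_0\in A(\lambda)$, we have $2\varphi(T_0)=\pi\epsilon$ for some $\epsilon\in\{-1,1\}$. Lemma \ref{lemaT0} then gives, for every $t\in\R$,
$$\varphi(T_0+t)=\pi\epsilon-\varphi(T_0-t).$$

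Next I substitute $s=T_0+t$, with $t\in[-T_0,T_0]$, to get
$$z(2T_0)=\int_{-T_0}^{T_0}\cos\varphi(T_0+t)\,dt .$$
By the identity above,
$$\cos\varphi(T_0+t)=\cos\bigl(\pi\epsilon-\varphi(T_0-t)\bigr)=-\cos\varphi(T_0-t),$$
since $\cos(\pm\pi-\alpha)=-\cos\alpha$. Hence $f(t):=\cos\varphi(T_0+t)$ satisfies $f(-t)=-f(t)$. An odd continuous function integrates to zero over the symmetric interval $[-T_0,T_0]$, so $z(2T_0)=0$.

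Alternatively, I can split the integral into $\int_0^{T_0}$ and $\int_{T_0}^{2T_0}$ and substitute $s\mapsto 2T_0-s$ in the second piece; it then cancels the first. This is the same argument.

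The one point needing care is that $\varphi$ lives on $\mathbb{S}^1$, so \eqref{property} holds only modulo $2\pi$. This is harmless, because $\cos$ is $2\pi$-periodic. Apart from that, the proof is a direct computation and I expect no real obstacle.
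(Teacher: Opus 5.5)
Your proposal is correct and follows essentially the same route as the paper: both use the reflection identity of Lemma \ref{lemaT0} to get $\cos\varphi(T_0+t)=-\cos\varphi(T_0-t)$. The paper then splits $\int_0^{2T_0}\cos\varphi$ at $T_0$ and cancels the two halves via $s\mapsto 2T_0-s$, which is exactly your alternative and is equivalent to your odd-function argument; your remark that the identity holds only modulo $2\pi$, which is harmless because $\cos$ is $2\pi$-periodic, is a correct extra observation.
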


\begin{proof}
Since $T_0\in A(\lambda)$ it satisfies 
$$\forall t\in\R, \hspace{.5cm}\varphi(T_0-t)+\varphi(T_0+t)=\pi\epsilon.$$
Then, 
    $$\forall t\in\R, \hspace{.5cm}\cos\varphi(t)=\cos\varphi(T_0+(t-T_0))=\cos(\pi\epsilon-\varphi(T_0-(t-T_0)))=-\cos\varphi(-t+2T_0).$$
    Hence, using the previous relation and a change of variables, we get 
    $$\int_{T_0}^{2T_0}\cos\varphi(t)dt=-\int_{T_0}^{2T_0}\cos\varphi(-t+2T_0)dt=\int_{T_0}^0\cos\varphi(s)ds=-\int_0^{T_0}\cos\varphi(s)ds,$$
    implying that 
$$z(2T_0)=\int_{0}^{2T_0}\cos\varphi(t)dt=\int_{0}^{T_0}\cos\varphi(t)dt+\int_{T_0}^{2T_0}\cos\varphi(t)dt=0,$$
concluding the proof.
\end{proof}

We can now prove our main result of this section related to the zeros of the $z$-component of normal extremal trajectories. This result will be central in the proof for the upper bound of the cut time in the next section.

\begin{theorem}
\label{mainsection3}
Let $\varphi\in \mathbb{S}^1$ be a solution of (\ref{HamiltonianEquivalent}) determined by $\lambda\in C$. If $\lambda$ is not an equilibrium of (\ref{HamiltonianEquivalent}) it holds:
\begin{enumerate}
    \item The cardinality of $A(\lambda)$ is one and $z$ has at most one zero in $(0, +\infty)$;
    \item The cardinality of $A(\lambda)$ is greater or equal two and the zeros of $z$ are given by 
    $$2A(\lambda)\cup\N\tau(\lambda);$$
    \item If $\varphi$ is a periodic function with $2\varphi(0)\neq\pm\pi$, then 
    $$\tau(\lambda)=\min\Bigl\{T>0;\hspace{.5cm} z(T)=0\hspace{.3cm}\mbox{ and }\hspace{.5cm}\varphi(0)+\varphi(T)\neq\pm\pi\Bigr\}.$$
\end{enumerate}
\end{theorem}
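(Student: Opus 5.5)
The plan is to control the zeros of $z(t)=\int_0^t\cos\varphi(s)\,ds$ using two facts. First, $z$ is strictly monotone on any interval where $\varphi$ does not meet the lines $2\varphi=\pm\pi$. Second, Lemma \ref{lemaT0} gives a reflection symmetry of $z$ about each point of $A(\lambda)$.

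\emph{Step 1 (monotonicity).} Since $\lambda$ is not an equilibrium, $\cos\varphi(t)=0$ exactly when $t\in A(\lambda)$ (for $t\geq 0$), and these times are isolated. So $\cos\varphi$ has constant sign on each open interval of $[0,\infty)\setminus A(\lambda)$, and $z$ is strictly monotone on the closure of each such interval.

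\emph{Step 2 (reflection).} For $T_0\in A(\lambda)$, relation (\ref{property}) gives $\cos\varphi(T_0+s)=-\cos\varphi(T_0-s)$. Integrating from $0$ to $s$ yields
$$z(T_0+s)-z(T_0)=z(T_0-s)-z(T_0),\qquad\text{that is,}\qquad z(T_0+s)=z(T_0-s).$$
This is a slight strengthening of the computation in Lemma \ref{firstzero}, which is its special case $s=T_0$.

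\emph{Item 1.} Suppose $A(\lambda)=\{T_1\}$. On $[0,T_1]$ the function $z$ is strictly monotone with $z(0)=0$, so it has no zero in $(0,T_1]$. On $[T_1,\infty)$ it is again strictly monotone, so it has at most one zero there, namely $2T_1$ by Lemma \ref{firstzero}. If $T_1=0$, then $z$ is monotone on all of $[0,\infty)$ and has no positive zero.

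\emph{Item 2.} By Proposition \ref{periodic} we have $A(\lambda)=\{T_1+k\tau/2\}$ with $2T_1<\tau$, where $\tau=\tau(\lambda)$. Lemma \ref{firstzero} gives $z(2T_1+k\tau)=0$ for all $k\geq 0$. Because $\varphi$ is $\tau$-periodic,
$$z(t+\tau)-z(t)=\int_0^\tau\cos\varphi=z(\tau).$$
Evaluating at $t=2T_1$ gives $z(\tau)=0$, so $z$ is $\tau$-periodic and vanishes on $\N\tau$. It therefore suffices to show that the only zeros of $z$ in $[0,\tau]$ are $0$, $2T_1$ and $\tau$.
\begin{itemize}
\item On $[0,T_1]$ there are no positive zeros, by monotonicity.
\item On $[T_1,T_1+\tau/2]$, $z$ is monotone and $2T_1\leq T_1+\tau/2$, so $2T_1$ is the unique zero there.
\item On $[T_1+\tau/2,T_1+\tau]$, reflecting about $T_1+\tau/2\in A(\lambda)$ (Step 2) maps this interval onto the previous one. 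Hence its only zero is $2(T_1+\tau/2)-2T_1=\tau$.
\end{itemize}
Together with periodicity, the zero set is $\{2T_1+k\tau\}\cup\N\tau=2A(\lambda)\cup\N\tau$. The degenerate case $T_1=0$ is covered by the same argument.

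\emph{Item 3.} Here $2\varphi(0)\neq\pm\pi$, so $T_1>0$. By item 2, the positive zeros of $z$ are the times $2T_1+k\tau$ and $n\tau$. At $T=2T_0$ with $T_0\in A(\lambda)$, taking $t=T_0$ in (\ref{property}) gives $\varphi(0)+\varphi(2T_0)=\pm\pi$, so all zeros in $2A(\lambda)$ are excluded. At $T=n\tau$ we have $\varphi(0)+\varphi(n\tau)=2\varphi(0)\neq\pm\pi$, so these zeros are admissible. Their minimum is $\tau$, which proves the formula.

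The main obstacle is ruling out extra zeros in item 2. I expect the reflection identity of Step 2, combined with monotonicity between consecutive points of $A(\lambda)$, to resolve it. The point that needs care is checking that $2T_1$ and $\tau$ fall inside the correct monotone pieces, which uses $2T_1<\tau$ from Proposition \ref{periodic}.
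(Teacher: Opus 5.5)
Your proof is correct and is essentially the paper's argument. Your strict monotonicity of $z$ between consecutive points of $A(\lambda)$ is the contrapositive of the Rolle's-theorem steps the paper uses in items 1 and 2, and you combine it with Lemma \ref{firstzero}, Proposition \ref{periodic} and the $\tau(\lambda)$-periodicity of $\varphi$ as the paper does, with item 3 handled identically.

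The differences are small improvements. The identity $z(t+\tau)-z(t)=z(\tau)$, evaluated at $t=2T_1$, gives $z(\N\tau(\lambda))=0$ more directly than the paper's reflection computation. Your piece-by-piece count of zeros on $[0,\tau(\lambda)]$ also avoids the looseness in the paper's Rolle count, which only produces three distinct points of $A(\lambda)\cap[0,\tau(\lambda))$ once the zeros $0,S,2T_1,\tau(\lambda)$ are ordered and Rolle is applied between consecutive ones.
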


\begin{proof} 1. Let us assume that $0<T_1<T_2$ be such that $z(T_1)=Z(T_2)=0$. Then, by Rolle's Theorem, 
$$z(0)=z(T_1)=z(T_2)=0\hspace{.5cm}\implies\hspace{.5cm} \exists \tau_1\in(0, T_1),\hspace{.5cm} \tau_2\in (T_1, T_2);  \hspace{.5cm}\dot{z}(\tau_1)=\dot{z}(\tau_2)=0.$$
Since $\dot{z}=\cos\varphi$, we get
$$i=1, 2, \hspace{.5cm}0=\dot{z}(\tau_i)=\cos\varphi(\tau_i)\hspace{.5cm}\implies\hspace{.5cm}2\varphi(\tau_i)=0\hspace{.5cm}\implies\hspace{.5cm} 2\varphi(\tau_i)=\pi\epsilon_i, \epsilon_i\in\{-1, 1\}.$$
Hence, $0<\tau_1<\tau_2$ and $\tau_1, \tau_2\in A(\lambda)$, showing that $A(\lambda)$ has cardinality at least two, which implies the result.

2. Since $A(\lambda)$ has cardinality at least two, Proposition \ref{periodic} implies the existence of $0\leq 2T_1<\tau(\lambda)$ such that 
$$A(\lambda)=\left\{T_1+k\frac{\tau(\lambda)}{2}, k\geq 0\right\}.$$
In particular, by Lemma \ref{firstzero}, we have that
$$\int_0^{2T_1}\cos\varphi(t)dt=\int_0^{2T_1+k\tau(\lambda)}\cos\varphi(t)dt=0,$$
hence, 
$$0=\int_0^{2T_1+k\tau(\lambda)}\cos\varphi(t)dt=\int_0^{2T_1}\cos\varphi(t)dt+\int_{2T_1}^{2T_1+k\tau(\lambda)}\cos\varphi(t)dt=\int_{2T_1}^{2T_1+k\tau(\lambda)}\cos\varphi(t)dt$$
$$=-\int_{2T_1}^{2T_1+k\tau(\lambda)}\cos\varphi(-t+2T_1+k\tau(\lambda))dt=\int_{0}^{k\tau(\lambda)}\cos\varphi(s)ds=z(k\tau(\lambda)),$$
where for the first equality in the second line we used property (\ref{property}) and the $\tau(\lambda)$-periodicity of $\varphi$.

Therefore, by Lemma 4.3 and the $\tau(\lambda)$-periodicity of the solution, we get that $2A(\lambda)\cup \N\tau(\lambda)$ are zeros of the $z$-component. Now, if $T>0$ is such that $z(T)=0$, there exists $n\in\N_0$ such that $T-n\tau(\lambda)\in [0, \tau(\lambda))$ and hence, it is enough to show that 
$$z(S)=0, \hspace{.5cm}S\in [0, \tau(\lambda))\hspace{.5cm}\implies \hspace{.5cm}S=2T_1,$$

Arguing by contradiction, if $S\in [0, \tau(\lambda))$ is such that $S\neq 2T_1$, we can as in item 1. apply Rolle's Theorem to obtain points $T_2, T_3$ in the intervals determined by $\{S, 2T_1\}$, and $\{2T_1, \tau(\lambda)\}$, respectively, such that  $T_2, T_3\in A(\lambda)$. Hence, $T_1, T_2, T_3$ are distinct points in $A(\lambda)\cap [0, \tau(\lambda))$ which, Proposition \ref{periodic}, is a contradiction as
$$A(\lambda)=\left\{T_1+k\frac{\tau(\lambda)}{2}, k\geq 0\right\}\hspace{.5cm}\implies\hspace{.5cm}A(\lambda)\cap[0, \tau(\lambda))=\left\{T_1, T_1+\frac{\tau(\lambda)}{2}\right\}.$$
Therefore, $2T_1$ is the only zero of $z$ in the interval $[0, \tau(\lambda))$, implying the $2A(\lambda)\cup\N\tau(\lambda)$ is the set of zeros of the $z$-component.

3. By the previous item, the first two positive zeros of $z$ are $2T_1$ and $\tau(\lambda)$ where $T_1=\min A(\lambda)$. Now, the fact that $T_1\in A(\lambda)$ implies by property (\ref{property}) that $\varphi(0)+\varphi(2T_1)=\epsilon\pi$ for $\epsilon\in\{-1, 1\}$. Hence, 
$$\varphi(0)+\varphi(T)=\epsilon\pi=\epsilon\pi+2(1-\epsilon)\pi=\pi,$$
implying that 
$$2T_1\notin \Bigl\{T>0;\hspace{.5cm} z(T)=0\hspace{.3cm}\mbox{ and }\hspace{.5cm}\varphi(0)+\varphi(T)\neq\pi\Bigr\}.$$
On the other hand, if $2\varphi(0)\neq\pm\pi$ then $\varphi(0)+\varphi(\tau(\lambda))=2\varphi(0)\neq\pm\pi$ showing that 
$$\tau(\lambda)\in\Bigl\{T>0; \hspace{.5cm}z(T)=0\hspace{.3cm}\mbox{ and }\hspace{.5cm}\varphi(0)+\varphi(T)\neq\pm\pi\Bigr\},$$
and concluding the result.
\end{proof}

\subsection{The Maxwell time and an upper bound for the cut time}

In this section we analyze the times associated with the Maxwell points. By using the results in the previous sections we are able to show that the first Maxwell time of the Klein four-group $\mathcal{K}$ given by the symmetries of (\ref{verticaldouble}) are constant of the solutions of the vertical part. Moreover, it coincides with the period of the pendulum for most of their solutions. Using that, we are able to conclude obtain that the period of the pendulum is an upper bound for the cut time of the normal extremal trajectories.

\begin{definition}
    For a normal extremal trajectory $g(t)$ with initial covector $\lambda$, we define:
    \begin{itemize}
\item[(i)] The cut time of $g(t)$ as
    $$t_{\mathrm{cut}}(\lambda):=\sup\{T>0; g(t)\mbox{ is optimal for }t\in [0, T] \}.$$

    \item[(ii)] The first Maxwell time as 
    $$t^{\mathrm{MAX}}_1(\lambda):=\inf\{T>0; (\lambda, T)\in\mathrm{MAX}\}.$$
    \end{itemize}
    
\end{definition}

Since we are in the analytic case, a normal extremal trajectory cannot be optimal after a Maxwell point, and hence
\begin{equation}
    \label{inequality}
    t_{\mathrm{cut}}(\lambda)\leq t^{\mathrm{MAX}}_1(\lambda), \hspace{.5cm}\forall \lambda\in C.
\end{equation}

We can now prove the main results of this paper.

\begin{theorem}
\label{mainMaxwell}
For the group of symmetries $\mathcal{K}$ of the pendulum (\ref{HamiltonianEquivalent}), it holds that
$$\mathrm{MAX}=\left\{(\lambda, n\tau(\lambda)); \;n\in\N, \hspace{.5cm} \tau(\lambda)<+\infty\hspace{.5cm}\mbox{ and }\hspace{.5cm} \lambda\notin\left\{\pm\frac{\pi}{2}\right\}\times\R\right\}$$
 $$\mbox{ and }\hspace{.5cm} t_1^{\mathrm{MAX}}(\lambda)=\left\{\begin{array}{cc}
        \tau(\lambda) & \lambda\notin\left\{\pm\frac{\pi}{2}\right\}\times\R, \\
        +\infty & \mbox{ otherwise.} 
    \end{array}\right.$$

\end{theorem}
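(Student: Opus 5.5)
We compute each stratum $\mathrm{MAX}_i$ separately, combining the fixed-point description (\ref{MaxwellinG}) with the zero analysis of Proposition \ref{zerosw} and Theorem \ref{mainsection3}. Since $\mathrm{Exp}\circ\varepsilon_i=\varepsilon_i\circ\mathrm{Exp}$, the condition $(\lambda,T)\in\mathrm{MAX}_i$ says that $g(T)=\mathrm{Exp}(\lambda,T)$ is a fixed point of the endpoint map $\varepsilon_i$, together with $\lambda\neq\lambda_i$.

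\emph{Stratum $\mathrm{MAX}_2$.} The fixed-point condition requires $g(T)=(0,0)$, so $w(T)=0$. By Proposition \ref{zerosw} this forces $w\equiv0$. Then $\sin\varphi\equiv0$, so $\lambda$ is an equilibrium $(0,0)$ or $(\pi,0)$. In that case $z(t)=\pm t\neq0$, so $g(T)\neq(0,0)$ and $\mathrm{MAX}_2=\emptyset$.

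\emph{Stratum $\mathrm{MAX}_1$.} Here we need $w(T)=0$, so again $w\equiv0$ and $\lambda\in\{(0,0),(\pi,0)\}$. But then $\lambda_1=-\lambda=\lambda$ on $\mathbb{S}^1$, which is excluded. Hence $\mathrm{MAX}_1=\emptyset$.

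\emph{Stratum $\mathrm{MAX}_3$.} This is the substantive part. The fixed-point condition is $z(T)=0$ with $w(T)$ arbitrary. By Proposition \ref{igualpendulo}, $\lambda\neq\lambda_3$ means $(\varphi(0),r(0))\neq(\pi-\varphi(T),r(T))$.

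We first reduce to a condition on $\varphi$ alone. If $\varphi(0)+\varphi(T)=\pi$, then the reversibility argument of Lemma \ref{lemaT0} (reflection about $\varphi=\pi/2$ centred at $T/2$) shows the map $\varphi\mapsto\pi-\varphi$, $t\mapsto T-t$ sends the solution to itself. This gives $r(T)=r(0)$ and hence $\lambda=\lambda_3$. Conversely, if $\varphi(0)+\varphi(T)\neq\pi$, then $\lambda\neq\lambda_3$ trivially. So $\lambda\neq\lambda_3$ is equivalent to $\varphi(0)+\varphi(T)\not\equiv\pi$. Working mod $2\pi$, the paper's "$\pm\pi$" covers both lines $\varphi=\pm\pi/2$; I would also check the reflection about $-\pi/2$ produces the same conclusion for $\varepsilon_3$ up to the identification.

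Now split by the cases of Theorem \ref{mainsection3}.
\begin{itemize}
\item If $\lambda$ is an equilibrium, then $z(t)=\pm t$ for $\varphi(0)\in\{0,\pi\}$, which has no positive zero; and for $\varphi(0)=\pm\pi/2$ we get $\lambda=\lambda_3$. No Maxwell points arise.
\item If $|A(\lambda)|=1$, say $A(\lambda)=\{T_0\}$, then $z(2T_0)=0$ by Lemma \ref{firstzero}, and by item 1 this is the only positive zero. But Lemma \ref{lemaT0} gives $\varphi(0)+\varphi(2T_0)=\pm\pi$, so $\lambda=\lambda_3$ and no Maxwell points arise. If $A(\lambda)=\emptyset$, then $z$ is strictly monotone and has no positive zeros.
\item If $\tau(\lambda)<\infty$, item 2 says the zeros are $2A(\lambda)\cup\N\tau(\lambda)$. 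The points of $2A(\lambda)$ are excluded exactly as in the previous case. The points $n\tau(\lambda)$ give $\varphi(0)+\varphi(n\tau)=2\varphi(0)$, which is $\not\equiv\pm\pi$ precisely when $\lambda\notin\{\pm\pi/2\}\times\R$.
\end{itemize}
One must also rule out a coincidence $n\tau=2T_1+k\tau$, which holds because $0\le2T_1<\tau$. The boundary case $T_1=0$ is exactly $\lambda\in\{\pm\pi/2\}\times\R$; there all zeros are excluded, since they reduce to reflection symmetric times. This yields the formula for $\mathrm{MAX}$.

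The first Maxwell time is then the infimum over $n\in\N$, namely $\tau(\lambda)$ in the admissible case and $+\infty$ otherwise (the empty infimum).

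The main obstacle is the equivalence $\lambda=\lambda_3\iff\varphi(0)+\varphi(T)\equiv\pi$. It requires carefully extending Lemma \ref{lemaT0} to reflection about a midpoint time $T/2$ even when $\varphi(T/2)$ is not $\pm\pi/2$. I would handle this by applying uniqueness directly to $\beta(t)=\pi-\varphi(T-t)$, which solves (\ref{HamiltonianEquivalent}) by Proposition \ref{verticalpart}. Combined with the signs mod $2\pi$, this gives the cleanest route.
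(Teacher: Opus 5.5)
\textbf{Verdict: the final answer is right, but one step of your argument is false as stated and its proposed proof is circular.}

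Your architecture matches the paper's. $\mathrm{MAX}_1$ and $\mathrm{MAX}_2$ are killed by Proposition \ref{zerosw}; your explicit treatment of $\mathrm{MAX}_2$ is more complete than the paper's, which jumps from $\mathrm{MAX}_1=\emptyset$ straight to $\mathrm{MAX}=\mathrm{MAX}_3$. $\mathrm{MAX}_3$ is then obtained by testing $\lambda\neq\lambda_3$ at the zeros of $z$ listed in Theorem \ref{mainsection3}.

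The gap is the reduction you flag as the main obstacle. The claimed equivalence $\lambda=\lambda_3\iff\varphi(0)+\varphi(T)\equiv\pi \pmod{2\pi}$ is false for general $T$. Your proposed proof is circular: $\beta(t)=\pi-\varphi(T-t)$ does solve (\ref{HamiltonianEquivalent}) with $\beta(0)=\varphi(0)$, but uniqueness also needs $\dot\beta(0)=r(T)$ to equal $\dot\varphi(0)=r(0)$, which is exactly the conclusion. Lemma \ref{lemaT0} genuinely needs $\varphi(T/2)=\pm\pi/2$.

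Here is a concrete counterexample. For $\det\theta<0$, take $\lambda=(a,r_0)$ with $r_0\neq0$ on a closed orbit around the centre $p_4$.
This orbit meets $\varphi=\pi/2$, so by Lemma \ref{lemaT0} and differentiation it is invariant under $(\varphi,r)\mapsto(\pi-\varphi,r)$. Hence $(\pi-a,r_0)$ lies on it, and since $r_0\neq0$, the value $\pi-a$ lies strictly between the extreme values of $\varphi$ on the orbit. Because $\dot\varphi=r$, such a vertical line is crossed once with $r>0$ and once with $r<0$ per period. At the crossing time $T$ where $r(T)$ has sign opposite to $r_0$, you get $\varphi(0)+\varphi(T)=\pi$ but $\lambda_3=(a,r(T))\neq\lambda$.

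The repair is to use the equivalence only where you need it, namely at $T\in 2A(\lambda)\cup\N\tau(\lambda)$, where it holds for specific reasons:
\begin{itemize}
\item If $T=2T_0$ with $T_0\in A(\lambda)$, differentiating the identity of Lemma \ref{lemaT0} gives $r(T_0-t)=r(T_0+t)$, hence $r(T)=r(0)$. Together with $\varphi(0)+\varphi(T)=\pm\pi$ this yields $\lambda=\lambda_3$; this is exactly the paper's step.
\item If $T=n\tau(\lambda)$, periodicity gives $(\varphi(T),r(T))=\lambda$. So $\lambda_3=(\pi-\varphi(0),r(0))$, and $\lambda=\lambda_3$ iff $\varphi(0)=\pm\pi/2$.
\end{itemize}
Substitute these two local arguments for your general equivalence in three places: the case $|A(\lambda)|=1$, the exclusion of $2A(\lambda)$, and the boundary case $\varphi(0)=\pm\pi/2$. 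With that change, the rest of your case analysis and your computation of $t_1^{\mathrm{MAX}}$ go through.
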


\begin{proof} By Proposition \ref{zerosw} the planar component of a normal extremal trajectory is never zero, or it is trivial. Hence, $\mathrm{MAX}_1=\emptyset$ implying that $\mathrm{MAX}=\mathrm{MAX}_3$, and we only have to analyze characterize the set $\mathrm{MAX}_3$.

If $\lambda\in C$ is an equilibrium point of (\ref{HamiltonianEquivalent}), the normal extremal trajectory can be explicitly calculated from the equations (\ref{horizontaldouble}) and takes the form 
$$g(t)=(0, \pm t\eta), \hspace{.5cm}\lambda\in \{p_2, p_4\}\hspace{.5cm}\mbox{ or }\hspace{.5cm}g(t)=(\pm t, 0) \hspace{.5cm}\lambda\in \{p_1, p_3\}.$$
Therefore, if $\lambda\in \{p_1, p_3\}$ then $(\lambda, T)\notin\mathrm{MAX}$ for all $T>0$. On the other hand, if $\lambda\in \{p_2, p_4\}$, then for all $T>0$ and some $\epsilon\in\{-1, 1\}$,
$$\varphi(0)+\varphi(T)=\epsilon\pi=\epsilon\pi+2(1-\epsilon)\pi=\pi\hspace{.5cm}\implies\hspace{.5cm}\lambda=(\varphi(0), 0)=(-\varphi(T)+\pi, 0)=\lambda_3,$$
showing that $(\lambda, T)\notin\mathrm{MAX}$ for all $T>0$. Therefore, if $(\lambda, T)\in \mathrm{MAX}$ then $\lambda$ is not an equilibrium of (\ref{HamiltonianEquivalent}).

Let then $(\lambda, T)\in\mathrm{MAX}_3$. By the equations (\ref{MaxwellinG}), it holds that 
$$\mathrm{MAX}_3=\{(\lambda, T)\in C\times\R_+; \hspace{.5cm}\lambda\neq\lambda_3\hspace{.5cm}\mbox{ and } \hspace{.5cm} z(T)=0\}.$$
Since $z(T)=0$, Rolle's Theorem implies that $A(\lambda)\neq\emptyset$. Moreover, since $\lambda$ is not an equilibrium, Theorem \ref{mainsection3} implies that $T\in 2A(\lambda)$ if the cardinality of $A(\lambda)$ is one and $T\in 2A(\lambda)\cup \N\tau(\lambda)$ if the cardinality is greater or equal two. As in the proof of item 3. of Theorem \ref{mainsection3}, if $T\in 2A(\lambda)$, then $\varphi(0)=-\varphi(T)+\pi$. Moreover, since $T/2$ satisfies property (\ref{property}), we get by derivation that 
$$ \forall t\in \R, \hspace{.5cm}r\left(\frac{T}{2}-t\right)=r\left(\frac{T}{2}+t\right)\hspace{.5cm}\implies\hspace{.5cm} r(T)=r(0),$$
and hence  
$$\lambda=(\varphi(0), r(0))=(-\varphi(T)+\pi, r(T))=\lambda_3,$$
implying that $T\in\N\tau(\lambda)$. Furthermore, the fact that $\varphi$ is a $\tau(\lambda)$-periodic function and $T=n\tau(\lambda)$ implies in particular that 
$$\varphi(0)=\varphi(T)\hspace{.5cm}\mbox{ and }\hspace{.5cm} r(0)=r(T),$$
and hence 
$$\lambda\neq\lambda_3\hspace{.5cm}\implies \hspace{.5cm}\lambda\notin\left\{\pm\frac{\pi}{2}\right\}\times\R,$$
allowing us to conclude that
$$\mathrm{MAX}=\mathrm{MAX}_3\subset \left\{(\lambda, n\tau(\lambda)); \;n\in\N, \hspace{.5cm} \tau(\lambda)<+\infty\hspace{.5cm}\mbox{ and }\hspace{.5cm} \lambda\notin\left\{\pm\frac{\pi}{2}\right\}\times\R\right\}.$$

Reciprocally, let $\lambda\in C$ be such that $\tau(\lambda)<+\infty$ and $\lambda\notin \left\{\pm\frac{\pi}{2}\right\}\times\R$. If $\varphi$ is the solution of (\ref{HamiltonianEquivalent}) determined by $\lambda$, then $\varphi$ is a $\tau(\lambda)$-periodic curve and  
$$2\varphi(0)\neq\pm\pi\hspace{.5cm}\implies\hspace{.5cm}\varphi(0)+\varphi(n\tau(\lambda))\neq\pi\hspace{.5cm}\implies\hspace{.5cm}\lambda\neq\lambda_3.$$
Since by Theorem \ref{mainsection3} $z(n\tau(\lambda))=0$, we get the equality 
$$\mathrm{MAX}=\left\{(\lambda, n\tau(\lambda)); \;n\in\N, \hspace{.5cm} \tau(\lambda)<+\infty\hspace{.5cm}\mbox{ and }\hspace{.5cm} \lambda\notin\left\{\pm\frac{\pi}{2}\right\}\times\R\right\},$$
as desired. Furthermore,  
$$t_1^{\mathrm{MAX}}(\lambda)=\left\{\begin{array}{cl}
    \tau(\lambda) & \mbox{ if }\tau(\lambda)<+\infty\mbox{ and }\lambda\notin\left\{\pm\frac{\pi}{2}\right\}\times\R \\
    +\infty & \mbox{ if }\tau(\lambda)=+\infty\mbox{ or }\lambda\in\left\{\pm\frac{\pi}{2}\right\}\times\R
\end{array}\right.\hspace{.5cm}\iff\hspace{.5cm} t_1^{\mathrm{MAX}}(\lambda)=\left\{\begin{array}{cl}
    \tau(\lambda) & \mbox{ if }\lambda\notin\left\{\pm\frac{\pi}{2}\right\}\times\R \\
    +\infty & \mbox{ otherwise }
\end{array}\right.,$$
concluding the proof.
\end{proof}

\begin{corollary}
    The Maxwell time $t_1^{\mathrm{MAX}}$ is invariant by the solutions of (\ref{HamiltonianEquivalent}) and by the action of $\mathcal{K}$.
\end{corollary}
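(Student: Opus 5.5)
By Theorem \ref{mainMaxwell}, $t_1^{\mathrm{MAX}}(\lambda)$ equals $\tau(\lambda)$ when $\lambda\notin\{\pm\frac{\pi}{2}\}\times\R$ and $+\infty$ otherwise. The statement therefore reduces to two invariance properties: of the period function $\tau$, and of the exceptional set $S:=\{\pm\frac{\pi}{2}\}\times\R$, or at least of the values of $t_1^{\mathrm{MAX}}$ on it. The plan is to verify these separately, first along the flow of (\ref{HamiltonianEquivalent}) and then under the generators $\varepsilon_1,\varepsilon_2,\varepsilon_3$ of $\mathcal{K}$.

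\emph{Invariance of $\tau$.} Along a solution this is immediate: every point of an orbit determines the same orbit, hence the same period, and the value $+\infty$ is kept on non-periodic orbits. For the $\mathcal{K}$-action I would use Proposition \ref{verticalpart}. Each $\varepsilon_i$ sends the solution through $\lambda$ to a solution through $\lambda_i$, which is either the original one composed with a sign change or a time reversal $t\mapsto T-t$. Since the orbit images are reparametrized orbits, periodicity and the minimal period are preserved. Hence $\tau(\lambda_i)=\tau(\lambda)$, with infinite periods mapped to infinite periods.

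\emph{The exceptional set $S$.} This is the delicate step, because $S$ is not invariant under the flow: a periodic orbit crosses $\varphi=\pm\pi/2$ at isolated times. The way I would handle it is to show that $t_1^{\mathrm{MAX}}$ still takes the value $\tau$ when it is read off along the whole orbit. Concretely, I would interpret the corollary as the statement that $t_1^{\mathrm{MAX}}$ agrees with $\tau$ off $S$ and that $\tau$ is constant on orbits. On $S$ itself the value $+\infty$ arises only because $\lambda=\lambda_3$ at every time $n\tau$. So on any non-equilibrium periodic orbit, the function $t_1^{\mathrm{MAX}}$ equals the constant $\tau$ except at the countably many points of the orbit lying in $S$. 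I will need to decide whether the corollary tolerates this, in the sense of holding for almost every point. If not, the alternative is to reformulate the invariance up to the exceptional set. Either way, I would flag this as the main obstacle and state the invariance for $\lambda\notin S$ together with invariance of $S$ under $\mathcal{K}$.

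\emph{Invariance of $S$ under $\mathcal{K}$.} Using Proposition \ref{igualpendulo}, $\varepsilon_1$ gives $-\lambda$, which maps $\varphi=\pm\pi/2$ to $\mp\pi/2$. For $\varepsilon_2$ and $\varepsilon_3$ one gets $\varphi(T)+\pi$ and $\pi-\varphi(T)$ respectively, and each of these lies in $\{\pm\pi/2\}$ exactly when $\varphi(T)$ does. Strictly speaking, the action of $\varepsilon_2$ and $\varepsilon_3$ involves the endpoint at time $T$, not $\lambda$ itself. To pass to $\lambda$, I would combine this with the flow statement: $\varepsilon_3(\lambda,T)$ is the reflection of $\lambda(T)$, and $\tau(\lambda(T))=\tau(\lambda)$. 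Combining the two invariances then gives the corollary, with the caveat about $S$ noted above.
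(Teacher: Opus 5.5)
Your reduction is the one the paper relies on. The paper gives no separate proof and reads the corollary off Theorem \ref{mainMaxwell}: $t_1^{\mathrm{MAX}}=\tau$ off $S=\{\pm\frac{\pi}{2}\}\times\R$, and $\tau$ is constant on orbits and preserved by $\mathcal{K}$.

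The obstacle you flag is genuine, and it is not a question of whether the corollary ``tolerates'' it. It is a counterexample to the literal statement, and the paper overlooks it. By Proposition \ref{periodic}, every non-equilibrium orbit with $\tau<+\infty$ has $|A(\lambda)|\geq 2$, so it meets $S$. At those points Theorem \ref{mainMaxwell} gives $t_1^{\mathrm{MAX}}=+\infty$, while at every other point of the orbit $t_1^{\mathrm{MAX}}=\tau<+\infty$. Hence $t_1^{\mathrm{MAX}}$ is non-constant along \emph{every} non-trivial periodic orbit. What is actually true is that $\tau$ is a first integral, i.e. $t_1^{\mathrm{MAX}}(\lambda(t))=t_1^{\mathrm{MAX}}(\lambda)$ whenever $\lambda,\lambda(t)\notin S$. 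State that version outright rather than leaving it open.

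Your $\mathcal{K}$ step has a related slip. Take the action $(\lambda,T)\mapsto(\lambda_i,T)$ of Proposition \ref{igualpendulo}. Passing through $\lambda(T)$ correctly gives $\tau(\lambda_i)=\tau(\lambda)$. It cannot transport membership in $S$, precisely because $S$ is not flow-invariant. Indeed $\lambda_3\in S$ iff $\varphi(T)=\pm\frac{\pi}{2}$. So for $\lambda\notin S$ with $\tau(\lambda)<+\infty$ and $T\in A(\lambda)$, you get $t_1^{\mathrm{MAX}}(\lambda)=\tau(\lambda)<+\infty=t_1^{\mathrm{MAX}}(\lambda_3)$. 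For that action, invariance holds only with the same caveat as for the flow, and your claimed ``invariance of $S$ under $\mathcal{K}$'' is false.

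Exact invariance does hold for the phase-space action introduced before Proposition \ref{verticalpart}: $(\varphi,r)\mapsto(-\varphi,-r)$, $(\varphi+\pi,-r)$, $(\pi-\varphi,r)$.
\begin{itemize}
\item Each of these maps $S$ onto $S$ directly, with no flow needed.
\item By Proposition \ref{verticalpart}, each maps orbits onto orbits (with time reversed for $\varepsilon_2,\varepsilon_3$), hence preserves $\tau$.
\end{itemize}
Thus $t_1^{\mathrm{MAX}}\circ\varepsilon_i=t_1^{\mathrm{MAX}}$ on all of $C$. That is the sense in which the $\mathcal{K}$-part of the corollary holds without exceptions.
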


Inequality (\ref{inequality}) together with the previous theorem implies that the period of the pendulum is a natural upper bound for the cut time for all $\lambda\in C$ whose first component is not a multiple of $\pi/2$. In what follows we show that the previous holds for any point of the cylinder. In order to do that, we introduce the concept of conjugated point.

\begin{definition}
    We say that a point $g(T)$, of a strictly normal geodesic, 
    $$g(t)=\mathrm{Exp}(\lambda, t), \hspace{.5cm} t\in [0, T],$$
    is a {\bf conjugate} to the point $g(0)$ along the geodesic $g(t)$ if $(\lambda, T)$ is a critical point of the exponential mapping. 
\end{definition}

By the general theory, a strictly normal geodesic cannot be optimal after a conjugate point (see for instance \cite[Chapter 8]{AADBUB}). In particular, the time associated to conjugate points are upper bounds for the cut time. The next result from asserts that the limit of Maxwell points are a conjugate point (see \cite[Proposition 5.1]{Sach4}).

\begin{proposition}
    Let $(\lambda_n, T_n),(\lambda'_n, T'_n)\in C\times\R_+$ and assume that $\mathrm{Exp}(\lambda_n, T_n)=\mathrm{Exp}(\lambda'_n, T'_n)$ for all , $n\in\N$. If $(\lambda_n, T_n)\neq(\lambda'_n, T'_n)$ and both sequences converge to the same point $(\lambda, T)$ and the geodesic $g(t)=\mathrm{Exp}(\lambda, t)$ is strictly normal, then its endpoint $g(T)=\mathrm{Exp}(\lambda, T)$ is a conjugate point.
\end{proposition}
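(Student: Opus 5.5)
The plan is to reproduce the standard argument from the general theory, as in \cite[Proposition 5.1]{Sach4}, arguing by contradiction with the inverse function theorem. Suppose $(\lambda,T)$ is \emph{not} a critical point of $\mathrm{Exp}$. The exponential map is analytic (in particular $C^1$) on $C\times\R_+$, since it is the projection of the flow of the analytic Hamiltonian vector field $\Vec{H}$ restricted to the level set $H=1/2$, composed with the time-$T$ map. The space $C\times\R_+$ is $3$-dimensional, like $G(\theta)$, so a non-critical point means that $d\,\mathrm{Exp}_{(\lambda,T)}$ is a linear isomorphism. The inverse function theorem then gives a neighborhood $U$ of $(\lambda,T)$ in $C\times\R_+$ on which $\mathrm{Exp}|_U$ is a diffeomorphism onto its image, and in particular is injective.

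Since $(\lambda_n,T_n)\to(\lambda,T)$ and $(\lambda'_n,T'_n)\to(\lambda,T)$, both sequences lie in $U$ for all $n$ large enough. The hypothesis $\mathrm{Exp}(\lambda_n,T_n)=\mathrm{Exp}(\lambda'_n,T'_n)$ together with injectivity on $U$ forces $(\lambda_n,T_n)=(\lambda'_n,T'_n)$. This contradicts the assumption that the two points are distinct. Hence $(\lambda,T)$ is a critical point of $\mathrm{Exp}$, and by the definition just given, $g(T)$ is conjugate to $g(0)$ along the strictly normal geodesic $g$.

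The only delicate point is the setup rather than the argument itself. One must make sure that $\mathrm{Exp}$ is regarded as a map between manifolds of equal dimension, $C\times\R_+$ (two dimensions on the cylinder plus time) into the three-dimensional group. One must also check that the notion of conjugate point used here, criticality of $\mathrm{Exp}$ at $(\lambda,T)$, is exactly the non-invertibility of the differential. The strict normality hypothesis is not needed for the contradiction. It only ensures that the conclusion is meaningful, because for strictly normal geodesics conjugate points bound the cut time. I also expect the smoothness of $\mathrm{Exp}$ to need a remark: the vertical system (\ref{verticaldouble}) and the horizontal system (\ref{horizontaldouble}) are analytic and have globally defined solutions (boundedness of orbits, Proposition \ref{heteroclinic}, plus linear growth in $w$). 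Smooth dependence on initial data then gives the required regularity.
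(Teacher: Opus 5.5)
Your proposal is correct. The paper gives no proof of its own and simply quotes \cite[Proposition 5.1]{Sach4}; your argument, that invertibility of $d\,\mathrm{Exp}_{(\lambda,T)}$ would make $\mathrm{Exp}$ injective near $(\lambda,T)$ by the inverse function theorem, contradicting $\mathrm{Exp}(\lambda_n,T_n)=\mathrm{Exp}(\lambda'_n,T'_n)$ for distinct points converging to $(\lambda,T)$, is the standard proof of that fact. One harmless slip in your regularity remark: $w$ need not grow linearly, since $\rho_{z(t)}\eta$ can grow exponentially in $t$, but global existence is unaffected because $w$ is obtained by quadrature of a continuous function once $(\varphi,r)$ and $z$ are globally defined.
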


We can now prove the following:

\begin{theorem}
    It holds
    $$t_{\mathrm{cut}}(\lambda)\leq \tau(\lambda), \hspace{.5cm}\forall \lambda\in C.$$
\end{theorem}

\begin{proof}
    From Theorem \ref{mainMaxwell} and inequality (\ref{inequality}) we only have to show the result for $\lambda\in\left\{\pm\frac{\pi}{2}\right\}\times\R$. Moreover, if $\tau(\lambda)=+\infty$ the result is certainly true, so we only have to consider 
    $$\lambda\in\left\{\pm\frac{\pi}{2}\right\}\times\R\hspace{.5cm}\mbox{ and }\hspace{.5cm}\tau(\lambda)<+\infty.$$
Under these hypothesis, the solution $\varphi$ of (\ref{HamiltonianEquivalent}) determined by $\lambda$ is a periodic solution. By continuity dependence of initial conditions, there exists periodic solutions $\varphi_n\in C$ of (\ref{HamiltonianEquivalent}) determined by $\lambda_n\in C$ such that 
$$\lambda_n\notin\left\{\pm\frac{\pi}{2}\right\}\times\R, \hspace{.5cm}\lambda_n\rightarrow\lambda, \hspace{.5cm}\mbox{ and }\hspace{.5cm}\tau(\lambda_n)\rightarrow\tau(\lambda).$$
Since $\tau(\lambda_n)<+\infty$ we conclude that $(\lambda_n, \tau(\lambda_n))\in\mathrm{MAX}$. In particular, $(\lambda_n, \tau(\lambda_n))\neq \varepsilon_3(\lambda_n, \tau(\lambda_n))$ and both sequences converge to $(\lambda, \tau(\lambda))$. Moreover, by the previous sections,  $\mathrm{Exp}(\lambda_n, \tau(\lambda_n))=\mathrm{Exp}(\varepsilon_3(\lambda_n, \tau(\lambda_n)))$, which by the previous proposition implies that $(\lambda, \tau(\lambda))$ is a conjugate point, and hence, 
$$t_{\mathrm{cut}}(\lambda)\leq \tau(\lambda), \hspace{.5cm}\forall \lambda\in C,$$
as stated.
\end{proof}

\section{Conclusion and future work}

The previous results show that, for almost all geodesics, the first Maxwell time is given by the period (in the extended sense) of the solutions of the vertical part. As a consequence, this period provides an upper bound for the cut time of any geodesic. A natural next question is therefore to determine when the cut time actually coincides with the period.

In our setting, the cut time is given by the minimum between the first Maxwell time and the first conjugate time. Thus, addressing this question reduces to the study of conjugate points for the sub-Riemannian problem on regular three-dimensional Lie groups. For the three-dimensional solvable Lie groups \(SE(2)\) and \(SH(2)\), it was shown (as a byproduct) that the cut time does indeed coincide with the period; both results can be found in \cite{Sach5}. We believe that the same conclusion holds for some of the regular cases considered here, depending on the eigenvalues of the matrix \(\theta\).

\end{document}